\newtheorem{thm}{Theorem}
\newtheorem{mainthm}{Theorem}
\newtheorem{cor}[thm]{Corollary}
\newtheorem{lem}[thm]{Lemma}
\newtheorem{prop}[thm]{Proposition}
\newtheorem{claim}[thm]{Claim}
\theoremstyle{definition}
\newtheorem{rem}[thm]{Remark}
\newtheorem{ex}[thm]{Example}
\newtheorem{obs}[thm]{Observation}
\numberwithin{thm}{section}
\newcommand{\cE}{\ensuremath{\mathcal E}}
\newcommand{\cF}{\ensuremath{\mathcal F}}
\newcommand{\cL}{\ensuremath{\mathcal L}}
\newcommand{\cR}{\ensuremath{\mathcal R}}
\newcommand{\bbE}{{\ensuremath{\mathbb E}} }
\newcommand{\bbP}{{\ensuremath{\mathbb P}} }
\newcommand{\bbR}{{\ensuremath{\mathbb R}} }
\newcommand{\bbT}{{\ensuremath{\mathbb T}} }
\newcommand{\bbZ}{{\ensuremath{\mathbb Z}} }
\let\a=\alpha \let\b=\beta   \let\d=\delta  
\let\f=\varphi \let\g=\gamma \let\h=\eta      \let\l=\lambda
\let\m=\mu   \let\n=\nu   \let\o=\omega      
\let\r=\rho  \let\s=\sigma \let\t=\tau
\let\O=\Omega      \let\X=\Xi
\newcommand{\tc}{\thinspace |\thinspace}
\newcommand{\var}{\operatorname{Var}}
\newcommand{\1}{{\ensuremath{\mathbbm{1}}} }
\newcommand{\ent}{{\ensuremath{\mathrm{Ent}}} }
\renewcommand{\le}{\leqslant}
\renewcommand{\ge}{\geqslant}
\renewcommand{\to}{\rightarrow}
\newcommand{\tcov}{{\ensuremath{T^{\rm rw}_{\rm cov}}} }
\newcommand{\scov}{{\ensuremath{\s_{\rm cov}}} }
\newcommand{\tmeet}{{\ensuremath{T_{\rm meet}^{\mathrm{\scriptstyle{rw}}}}} }
\DeclareDocumentCommand \tmix { o } {%
  \IfNoValueTF {#1} {{\ensuremath{T_{\rm mix}}} }{{\ensuremath{T_{\rm mix}^{\mathrm{\scriptstyle{#1}}}}}}%
}
\DeclareDocumentCommand \tinf { o } {%
  \IfNoValueTF {#1} {{\ensuremath{T_{\infty}}} }{{\ensuremath{T_{\infty}^{\mathrm{\scriptstyle{#1}}}}}}%
}
\DeclareDocumentCommand \tq { o } {%
  \IfNoValueTF {#1} {{\ensuremath{T_{q}}} }{{\ensuremath{T_{q}^{\mathrm{\scriptstyle{#1}}}}}}%
}
\DeclareDocumentCommand \trel { o } {%
  \IfNoValueTF {#1} {{\ensuremath{T_{\rm rel}}} }{{\ensuremath{T_{\rm rel}^{\mathrm{\scriptstyle{#1}}}}}}%
}
\DeclareDocumentCommand \csob { o } {%
  \IfNoValueTF {#1} {{\ensuremath{\a^{-1}}} }{{\ensuremath{\left(\a^{\mathrm{\scriptstyle{#1}}}\right)^{-1}}}}%
}
\DeclareDocumentCommand \ttwo { o } {%
  \IfNoValueTF {#1} {{\ensuremath{T_{2}}} }{{\ensuremath{T_{2}^{\mathrm{\scriptstyle{#1}}}}}}%
}
\DeclareDocumentCommand \cD { o o } {%
  \IfNoValueTF {#1} {{\ensuremath{\mathcal{D}}} }%
  {\IfNoValueTF {#2} {{\ensuremath{\mathcal{D}^{\mathrm{\scriptstyle{#1}}}}}}%
  {{\ensuremath{\mathcal{D}^{\mathrm{\scriptstyle{#1}}}_{#2}}}}}}
\begin{document}
\begin{frontmatter}
\title{Coalescing and branching simple symmetric exclusion process}
\runtitle{Coalescing and branching SEP}

\begin{aug}
\author[A]{\fnms{Ivailo} \snm{Hartarsky}\ead[label=e1,mark]{hartarsky@ceremade.dauphine.fr}},
\author[B]{\fnms{Fabio} \snm{Martinelli}\ead[label=e2,mark]{martin@mat.uniroma3.it}}
\and
\author[A]{\fnms{Cristina} \snm{Toninelli}\ead[label=e3,mark]{toninelli@ceremade.dauphine.fr}}
\address[A]{CEREMADE, CNRS, Universit\'e Paris-Dauphine, PSL University, \printead{e1}, \printead{e3}}

\address[B]{Dipartimento di Matematica e Fisica, Universit\`a Roma Tre,
\printead{e2}}

\end{aug}

\begin{abstract}
Motivated by kinetically constrained interacting particle systems
(KCM), we consider a reversible coalescing and branching simple
exclusion process on a general finite graph $G=(V,E)$ dual to the
biased voter model on $G$. Our main goal are tight bounds on its
logarithmic Sobolev constant and relaxation time, with particular focus on the delicate
slightly supercritical regime in which the equilibrium density of
particles tends to zero as $|V|\to \infty$. Our results allow us to
recover very directly and improve to $\ell^p$-mixing, $p\ge 2$, and to
more general graphs, the mixing time results of Pillai and Smith for
the Fredrickson-Andersen one spin facilitated (FA-$1$f) KCM on the discrete $d$-dimensional
torus. In view of applications to the more complex FA-$j$f KCM, $j>1$, we also extend part of the analysis to an analogous process with a more general product state space.  
\end{abstract}

\begin{keyword}[class=MSC2010]
\kwd[Primary ]{60K35}
\kwd[; secondary ]{60J80,60J90, 82C20}
\end{keyword}

\begin{keyword}
\kwd{Branching and coalescence}
\kwd{simple exclusion}
\kwd{biased voter
  model}
  \kwd{logarithmic Sobolev constant}
  \kwd{mixing time}
  \kwd{kinetically constrained models}
\end{keyword}

\end{frontmatter}

\section{Introduction}
In this work we study a coalescing and branching simple symmetric
exclusion process ({CBSEP}) on a general finite graph $G=(V,E)$. The model was first
introduced by Schwartz \cite{Schwartz77} in 1977 (also see Harris
\cite{Harris76}) as follows. Consider a system of particles performing independent continuous time random
walks on the vertex set of a (finite or infinite) graph $G$ by jumping along each edge with rate 1, which coalesce when
they meet (a particle jumping on top of another one is destroyed) and
which branch with rate $\b>0$ by creating an additional particle at a empty neighbouring vertex. The process is readily seen to be reversible
w.r.t.\ the Bernoulli($p$)-product measure with $p=\frac{\b}{(1+\b)}$. Initially the model was introduced in order to study the biased voter model \cite{Schwartz77} (also known as Williams-Bjerknes tumour growth model \cite{Williams72}), which turns out to be its dual additive interacting particle system \cite{Griffeath79}.\footnote{In fact, biased voter and Williams-Bjerknes models slightly differ on non-regular graphs. For such graphs {CBSEP} is the additive dual of the former.} A further duality in between the two processes in the Sudbury--Lloyd sense \cite{Sudbury97} has been established since then, which shows that the law of CBSEP at a fixed time can be obtained as a $p$-thinning of the biased voter model (see \cite{Swart13}*{Exercise 3.6}). When $\b=0$ this model reduces to coalescing random walks, additive dual to the standard voter model, which have both been extensively studied (see e.g.\ \cites{Liggett99,Liggett05}).

When the graph is the $d$-dimensional hypercubic lattice,
the first results were obtained by Bramson and Griffeath
\cites{Bramson81,Bramson80}. In particular, they showed that the law
of CBSEP converges weakly to its unique invariant measure starting
from any non-empty set of particles and for any dimension
$d$. Moreover, building on their work, Durrett and Griffeath
\cite{Durrett82} proved a shape theorem for this process, which easily
implies that CBSEP on the discrete torus of side length $L\to\infty$
exhibits mixing time cutoff (but without any control on the critical window). In
the case of the regular tree a complete convergence result is due to
Louidor, Tessler, and Vandenberg-Rodes \cite{Louidor14}. In the particular
setting of
$\bbZ$ a key observation is that the rightmost (or leftmost) particle performs a
biased random walk with explicit constant drift (see Griffeath
\cite{Griffeath79}). For more advanced results see e.g.\ the work by Sun and Swart \cite{Sun08}.

While the main focus of the above-mentioned works was the long-time
behavior of the process on \emph{infinite} graphs, our interest will concentrate instead on
the mixing time for \emph{finite} graphs. We determine the logarithmic Sobolev
constant and relaxation time of the model quite precisely on a wide spectrum of relatively
sparse finite graphs and for values of the branching rate $\b$ which
are o(1) as $|V|\to\infty$ (see Theorem \ref{mainthm 1} and Corollary
\ref{cor: 1}). For instance,
our results imply that for transitive bounded degree graphs the
inverse of the logarithmic Sobolev constant and relaxation time when $\b=1/|V|$ are, up to a
logarithmic correction, equal to the cover time of the graph. We will
then use these results to strengthen and extend the findings of Pillai
and Smith \cites{Pillai17,Pillai19} on the mixing time for the FA-$1$f
kinetically constrained model in the same regime (see Corollary
\ref{cor: FA}). Motivated by a
different application to the kinetically constrained models FA-$j$f
with $j>1$ (see \cite{Hartarsky20FA}), we then investigate a
version of the model in which the single vertex state space $\{0,1\}$
is replaced by an arbitrary finite set and we bound its mixing time
(see Theorem \ref{mainthm 3}).

\subsection{The CBSEP and $g$-CBSEP models}
\label{subsec:CBSEP}
Let $G=(V,E)$ be a finite connected graph with $n$ vertices.
The degree of $x \in V$ is denoted by $d_x$. Minimum, maximum, and
average degrees in $G$ are denoted by $d_{\rm min}, d_{\rm max}$ and
$d_{\rm avg}$, respectively. For any $\o\in\O=\{0,1\}^V$ and any vertex $x\in V$ we say that $x$ is \emph{filled/empty}, or that there is a \emph{particle/hole} at $x$, if $\o_x=1/0$. We define $\O_+=\O\setminus \{0\}$ to be the event that there exists at least one particle. Similarly, for any edge $e=\{x,y\}\in E$ we refer to $(\o_x,\o_y)\in \{0,1\}^{\{x,y\}}$ as the state of $e$ in $\o$ and write $E_e=\{\o\in\O\tc\o_x+\o_y\neq 0\}$ for the event that $e$ is not empty.

Given $p\in
(0,1)$, let $\pi=\bigotimes_{x\in V}\pi_x$ be the product Bernoulli measure, in which each vertex is filled with probability $p$, and let $\mu(\cdot):=\pi(\cdot\tc \O_+)$. Given an edge
$e=\{x,y\}$ we write
$\pi_e:=\pi_x\otimes\pi_y$ and $\l(p):=\pi(E_e)=p(2-p)$. In the
sequel we will always assume for simplicity that $p$ is bounded away
from $1$ (e.g.\ $p\le 1/2$).

The {CBSEP}, the main object of this work, is a continuous-time Markov chain on $\O_+$ for which the state of any edge $e\in E$ such that $E_e$ occurs  is resampled with rate one w.r.t.\ $\pi_e(\cdot\tc E_e).$ Thus, any edge containing exactly one particle with rate $(1-p)/(2-p)$ moves the particle to the opposite endpoint (the \emph{SEP move}) and
with rate $p/(2-p)$ creates an extra particle at its empty endpoint
(the \emph{branching move}). Moreover, any edge containing two
particles with rate $2(1-p)/(2-p)$ kills one of the two particles chosen uniformly (the \emph{coalescing move}).  The chain is readily seen to be reversible w.r.t.\ $\mu$ and
ergodic on $\O_+$, because it can reach the configuration with a
particle at each vertex.
If $c(\o,\o')$ denotes the jump rate from $\o$ to $\o'$, the Dirichlet
form $\cD(f)$ of the chain has the expression
\begin{equation}
\label{eq:def:cD}
\cD(f)=\frac 12 \sum_{\o,\o'}\mu(\o)c(\o,\o')\left(f(\o')-f(\o)
  \right)^2=\sum_{e\in E}\mu(\1_{E_e}\var_{e}(f\tc E_e)).\end{equation}
 It is also not hard to check that CBSEP is equivalent to the
 coalescing and branching random walks described in the introduction
up to a global time-rescaling.

We will also consider a \emph{generalised} version of  {CBSEP}, in the
sequel {$g$-{CBSEP}}, defined as follows. We are given a graph $G$ as
above together with a probability space $(S,\rho),$ where $S$ is a finite set and $\rho$ a probability measure on $S$. We still
write $\rho=\bigotimes_{x\in V}\rho_x$ for the product probability on $\O^{(g)}:=S^V$. In the \emph{state
  space} $S$, we are given a bipartition $S_1\sqcup S_0=S$, and
we write $p:=\rho(S_1)\in(0,1).$
We
define the  \emph{projection} $\f:\O^{(g)}\to \O=\{0,1\}^V$ by $\f(\o)=(\1_{\{\o_x\in
  S_1\}})_{x\in V}$ and we let $\O_+^{(g)}=\{\o\in \O:\ \sum_x
(\f(\o))_x\ge 1\}=\f^{-1}(\O_+)$. For any edge $e=\{x,y\}\in E$ we also let $E^{(g)}_e$
be the event that there exists a particle at $x$ or at $y$ for $\f(\o)$.
In {$g$-{CBSEP}} every edge $e=\{x,y\}$ such that $E^{(g)}_e$ is satisfied is resampled
with rate 1 w.r.t.\ $\rho_x\otimes\rho_y(\cdot\tc E_e^{(g)})$. A key property is
that its projection chain onto the variables $\f(\o)$ coincides with {CBSEP} on $G$ with parameter $p$.
As with CBSEP, the {$g$-{CBSEP}}  is 
reversible w.r.t.\ 
$\rho_+=\rho(\cdot\tc \O^{(g)}_+)$
and
ergodic on $\O^{(g)}_+$. For the main motivation behind $g$-CBSEP we refer the reader to Section \ref{sec:g-motivations}.

\subsection{The FA-$j$f KCM}
\label{subsec:FA}
We next define another class of models of interest---the $j$-neighbour or the Fredrickson-Andersen
$j$-facilitated kinetically constrained spin model (FA-$j$f KCM for short). In the
setting of Section \ref{subsec:CBSEP} for {CBSEP}, these chains evolve as
follows. With rate one and
independently w.r.t.\ the other vertices, the state of each vertex
$x\in V$ with at least $j$ neighbouring particles is resampled
w.r.t.\ $\pi_x$. In this paper we will focus on the simplest case $j=1$, the case $j=0$ being
trivial. As for {CBSEP} it is immediate to check that,
on $\O_+$, the chain is ergodic with $\mu$ as the unique reversible
measure, and that its Dirichlet form is
\[
\cD[FA](f)=\sum_x\mu\left({\1_{\left\{\sum_{\{x,y\}\in E}\o_y\ge 1\right\}}}p(1-p)(f(\o^x)-f(\o))^2\right),
  \]
  where $\o^x$ denotes the configuration $\o$ flipped at $x$.

The FA-$1$f KCM has been extensively studied (see e.g.\ \cites{Fredrickson84,Fredrickson85,Blondel13,
Cancrini08,
Cancrini09}). Of particular relevance for us are the beautiful works
of Pillai and Smith \cites{Pillai17,Pillai19} that we present
next. For any positive integers $d$ and $L$, set $n=L^d$, and let $\bbZ_L=\{0,1,\dots,
L-1\}$ be the set of remainders modulo $L$. The $d$-dimensional
discrete torus with $n$ vertices, $\bbT^d_n$ in the sequel, is the set $\bbZ_L^d$
endowed with the graph structure inherited from $\bbZ^d$. For the \emph{discrete time}
version of FA-$1$f on $\bbT_n^d$ with $p=c/n$ \cites{Pillai17,Pillai19} provide a rather
precise bound for the (total variation) mixing time $\tmix[FA]$. Translated into
the continuous time setting described above, their results read
\begin{equation}
\label{eq:PS}
\begin{aligned}
  C^{-1}n^2&\le \tmix[FA]\le C n^2\log^{14}(n)  &d&{}=2\\
  C^{-1}n^2&\le \tmix[FA]\le C n^2\log(n)  &d&{}\ge 3,
\end{aligned}
\end{equation}
where $C>0$ may depend on $d$ but not on $n$.
\begin{rem}
In \cite{Pillai17}*{Section 2} it was argued that $\tmix[FA]$ should
be lower bounded by the time necessary to get two well-separated particles starting from
one. By reversibility, and since to move an isolated particle by one step, we should first create a particle at a neighbouring site at rate $p$,
this time should correspond 
$p^{-1}\tmeet$, where $\tmeet$
is the meeting time of two independent continuous time
random walks on $\bbT_n^2$ with independent uniformly distributed starting points.
In particular, since in two dimensions $\tmeet=\Theta(n\log(n))$, in
\cite{Pillai19}*{Remark 1.1} it was conjectured that
$\tmix[FA]=\O(p^{-1}n\log(n))=\O(n^2\log(n))$ in the regime $p=\Theta(1/n)$ and this was recently confirmed by Shapira \cite{Shapira20}*{Theorem 1.2}. As it will be apparent in the proof of Theorem \ref{mainthm 1}\ref{lower:main}, this heuristics together with the attractiveness of {CBSEP} will allow us to prove a lower bound on the logarithmic Sobolev constant and relaxation time of {CBSEP} on a general graph.
\end{rem}

\subsubsection{Relationship between {CBSEP} and FA-$1$f}
Notice that the branching and coalescing moves of {CBSEP} are exactly the moves allowed in FA-$1$f. Moreover, the SEP move for the edge $\{x,y\}$ from $(1,0)$ to $(0,1)$ can be reconstructed using two consecutive FA-$1$f moves, the first one filling the hole at $y$ and the second one emptying $x$. If we
also take into account the rate for each move, we easily get the
following comparison between the respective Dirichlet forms (see e.g.\ \cite{Levin09}*{Ch. 13.4}): there exists an absolute constant $c>0$ such that for all $f:\Omega_+\to \bbR$ it holds that
  \begin{equation}
    \label{eq:13}
    c^{-1}\cD[FA](f)\le
    \cD(f)\le c d_{\rm max}p^{-1}  \cD[FA](f).
  \end{equation}
In our application of the main results for CBSEP  to the estimate of the mixing time of the FA-$1$f chain (see Corollary \ref{cor: FA}) for $p\ll 1$ only the upper bound, which we believe to be sharper, will count.

Although the two models are clearly closely related, we would like to
emphasise that {CBSEP} has many advantages over FA-$1$f, making its study
simpler. Most notably, {CBSEP} is \emph{attractive} in
the sense that there exists a grand-coupling (see e.g
\cite{Levin09}) which preserves the partial order on $\O$
given by $\o\prec \o'$ iff $\o_x\le \o'_x$ for all $x\in V$ (as it can be readily verified via the construction of Section \ref{sec:graphical}). Furthermore, it is also natural to embed in {CBSEP} a continuous time random walk $(W_t)_{t\ge 0}$ on $G$ such that {CBSEP} has a particle at $W_t$ for all $t\ge 0$. The latter is a particularly fruitful feature, which we will use in Section \ref{sec:main3}, and which is challenging to reproduce for FA-$1$f \cite{Blondel13}.

\subsubsection{Motivations for $g$-CBSEP: the FA-2f KCM}
\label{sec:g-motivations} 
The generalisation of CBSEP covering the case of general (finite) single vertex state space $S$ was introduced in \cite{Hartarsky20FA}*{Sections 1.5, 5.1} in order to model the effective random evolution of the \emph{mobile droplets} of the FA-$2$f KCM (see Section \ref{subsec:FA} for $j=2$). Modelling  the dynamics of mobile droplets as a suitable $g$-CBSEP combined with precise bounds on the relaxation time and probability of occurrence of the latter, proved to be a key tool to determine with very high precision the \emph{infection time} $\tau_0$ of the FA-2f KCM, i.e.\ the first time the state of the origin is $0$, as the density $p$ of the infection vanishes. In particular our Theorems \ref{mainthm 1} and \ref{mainthm 3} play a key role 
in \cite{Hartarsky20FA} for proving that, for the FA-2f model in
$\bbZ^2$ at density $p$, w.h.p.
\[\tau_0=\exp\left(\frac{\pi^2+o(1)}{9p}\right), \quad \text{as $p\to 0$}.\]

\section{Preliminaries}
In order to state our results we need first to recall some classical
material on \emph{mixing times} for finite Markov chains (see e.g.\
\cites{Saloff-Coste97,Hermon18}) and on the \emph{resistance distance}
on finite graphs (see \cites{Chandra89, Tetali91}, \cite{Levin09}*{Ch. 9} and \cite{Lyons16}*{Ch.2}).
\subsection{Mixing times and logarithmic Sobolev constant}
\label{sec:mixing}
Given a finite state space $\O$ and a uniformly positive probability measure $\mu$  on $\O$, let $(\o(t))_{t\ge 0}$
be a continuous time ergodic Markov chain on $\O$ reversible w.r.t.\ $\mu$, and write $P_\o^t(\o')=\bbP(\o(t)=\o'\tc
\o(0)=\o)$. Let also $h_\o^t(\cdot)=
P_\o^t(\cdot)/\mu(\cdot)$ be the relative density of the law
$P_\o^t(\cdot) $ w.r.t.\ $\mu$. The
total variation mixing time of the chain, $\tmix$, is defined as
\[
  \tmix=\inf\left\{t>0 :\max_{\o\in
    \O_+}\|P_\o^t(\cdot)-\mu(\cdot)\|_{\rm TV}\le 1/(2e)\right\},
\]
where $\|\cdot\|_{\rm TV}$ denotes the \emph{total variation distance} defined as
\[
  \|P_\o^t(\cdot)-\mu(\cdot)\|_{\rm TV} = \frac 12 \|h_\o^t(\cdot)-1\|_1,
\]
where $\|g\|^\a_\a= \mu(|g|^\a), \a\ge 1$. The $\ell^2$-mixing time
$T_2$ or, more generally, the $\ell^q$-mixing times $T_q$, $q\ge 1,$ are
defined by
\[
  T_q=\inf\left\{t>0: \max_{\o\in
    \O_+}\|h_\o^t(\cdot)-1\|_{q}\le 1/e\right\}.
\]
Clearly $\tmix \le T_q$ for $q>1$ and it is known that for all $1 <q \le \infty$ the $\ell^q$-convergence profile is determined entirely by that for $q=2$ (see e.g.\ \cite{Saloff-Coste97}*{Lemma 2.4.6}). Moreover,
(see e.g.\ \cite{Saloff-Coste97}*{Corollary 2.2.7}, 
\begin{equation}
  \label{eq:4}
\frac 12 \csob\le  T_2\le \csob (1+ \frac 14 \log\log(1/\mu_*)),
\end{equation}
where $\mu_*=\min_{\o\in \O_+}\mu(\o)$ and $\a$ is the \emph{logarithmic Sobolev
constant} defined as the inverse of the best constant $C$ in the
logarithmic Sobolev inequality valid for any $f:\O_+\to \bbR$
\begin{equation}
\label{eq:def:csob}
 \ent(f^2):= \mu(f^2\log(f^2/\mu(f^2)))\le  C\cD(f).
\end{equation}
Finally, the \emph{relaxation time} $\trel$ is then defined as the best constant $C$
in the Poincar\'e inequality
\begin{equation}
\label{eq:def:trel}
  \var(f)\le C \cD(f).
\end{equation}
It is not difficult to prove that $\trel\le \tmix$ and that (see e.g.\ \cite{Diaconis96}*{Corollary 2.11})
\begin{equation}
  \label{eq:2}
2\trel\le \csob \le (2+
\log(1/\mu_*))\times \trel.
\end{equation}
{\bf Notation warning.} In the sequel, unless otherwise indicated, all
the quantities introduced above will not carry any
additional label when referring to {CBSEP}. On the contrary, the same quantities referring to other chains, e.g.\ the FA-$1$f KCM
or {$g$-{CBSEP}}, will always carry an
appropriate superscript. 

\subsection{Resistance distance}
\label{sec:resistance distance}
Given a finite connected simple graph $G=(V,E)$, let $\vec E$ denote
the set of \emph{ordered pairs} of vertices forming an edge of $E$. For $\vec e=(u,v)\in\vec E$ we set $-\vec
e=(v,u)$. Given an anti-symmetric  function $\theta$ on $\vec E$ (that is $\theta(\vec e)=-\theta(-\vec e)$) and two
vertices $x,y$ we say that $\theta$ is a \emph{unit flow from $x$ to
  $y$} iff $\sum_{v: (u,v)\in \vec E}\theta((u,v))=0$ for all $u\notin \{x,y\}$
and $\sum_{v: (x,v)\in \vec E}\theta((x,v))=1$. The \emph{energy
of the flow} $\theta$ is the quantity
$\cE(\theta)=\frac 12 \sum_{\vec e\in\vec E}\theta(\vec e)^2$
and we set
\begin{equation}
  \label{eq:8}
\cR_{x,y}= \inf\{\cE(\theta):\theta \text{ is a
  unit flow from $x$ to $y$}\}.
\end{equation}
The Thomson principle \cite{Thomson67} states that
the infimum in \eqref{eq:8} is actually attained at a unique unit flow.

The quantity $\cR_{x,y}$ can be interpreted as the effective resistance
in the electrical network obtained by replacing the vertices of $G$
with nodes and the edges with unit resistances. In graph theory it is
sometimes referred to as the \emph{resistance distance}. It is also
connected to the behaviour of the simple random walk on $G$ via the
formula $2 |E|\cR_{x,y}=
C_{x,y},$ where $C_{x,y}$ is the expected
commute time between $x$ and $y$. Furthermore, if we let  $\trel[rw]$ be 
the relaxation time of the random walk,
the bound $\max_{x,y}\cR_{x,y}\le c \sqrt{\trel[rw] }/d_{\rm
  min}$ holds \cite{Oliveira19}*{Corollary 1.1} where $c>0$ is a universal constant (see also
\cite{Aldous02a}*{Corollary 6.21} for regular graphs). Finally, by taking the
shortest path between $x,y$ and the flow $\theta$ which assigns unit
flow to each edge of the path,  $\cR_{x,y}\le d(x,y),$ where $d(\cdot,\cdot)$ is the
graph distance, with equality iff $x,y$ are linked by a
unique path. In the sequel and for notation convenience we shall write $\bar
\cR_y$ for the spatial average $\frac 1n \sum_{x}\cR_{x,y}$. 
\begin{rem}
  \label{rem: resistance}
For later use we present bounds on $\max_y\bar \cR_y$ for certain
special graphs. If $G$ is the $d$-hypercube ($n:=|V|=2^d$) it follows
from \cite{Pippenger10} that $ \bar
\cR_{y}=\Theta(1/\log n)$ for all $y\in V$. If instead $G$ is the
regular $b$-ary tree with $b\ge 2$ then $\max_y\bar
\cR_y=\Theta(\log n)$. If $G$ is a uniform random $d$-regular graph with
$n\to \infty$, and $d$
independent of $n$, then w.h.p.\ $\trel[rw] =O(1)$ \cites{Broder87,Friedman89}, and
therefore w.h.p.\ $\bar \cR_y=\Theta(1)$ for all $y\in
V$. 
Finally, if $G$ is the discrete $d$-dimensional torus $\bbT_n^d\subset \bbZ^d$ with $n$ vertices, then, as $n\to \infty$ and $d$ is fixed, it follows from \cite{Lyons16}*{Proposition 2.15} that
  \[
    \max_{y}\bar \cR_{y}=\Theta(1)\times
    \begin{cases}
      n&\text{if $d=1$},\\
      \log(n) & \text{if $d=2$},\\
      1 & \text{if $d\ge 3$}.
     \end{cases}
   \]  
\end{rem}

\section{Main results}
Our first theorem establishes upper and lower bounds for the inverse of the logarithmic Sobolev constant, $\csob$, and relaxation time, $\trel$, of CBSEP in the general setting described
in the introduction. 

Let $\tmeet$ denote the expected meeting time for two
\emph{continuous time} random walks jumping along each edge at rate $1$  and
started from two uniformly chosen vertices of $G$. We refer the reader
to \cites{Aldous02a,Kanade19} for the close connections between $\tmeet$ and $\cR_{x,y}$. Let also 
$\tmix[rw]$ denote the mixing time of the \emph{discrete time} lazy simple random walk on $G$ (i.e.\ staying at its position with probability $1/2$).

\begin{mainthm}
  \label{mainthm 1} Let $p_n\in(0,1)$ and consider {CBSEP} with parameter $p_n$ on a sequence of graphs $G=G_n=(V_n,E_n)$ with $|V_n|=n,$ maximum degree $d_{\rm max}=d_{\rm max}(n),$ minimum degree $d_{\rm min}=d_{\rm min}(n),$ and average degree $ d_{\rm avg}=d_{\rm avg}(n)$.  \begin{enumerate}[label=(\alph*)]
\item\label{upper:easy} If $p_n=\O(1)$, then
\begin{align}\label{eq:upper:easy}\csob&{}\le O(n)\\
\label{eq:upper:easy:trel}\trel&{}\le O(1).
\end{align}
\item\label{upper:main} If $p_n\to0$, then for some absolute constant $c>0$
\begin{align}
\csob&{}\le c \max\left(\frac{d_{\rm avg}
       d_{\rm max}^2}{d_{\rm min}^2}  \tmix[rw]\log(n),
      \left(\max_y  \bar \cR_{y}\right) n|\log (p_n)|\right)
    \label{eq:upper:main}\\
\trel &{}\le c n\max_y  \bar \cR_{y}.
    \label{eq:upper:main:trel}
  \end{align}
\item\label{lower:easy} There exists an absolute constant $c>0$ such that for all $p_n\in(0,1)$
\begin{align}
\label{eq:lower:easy}
\csob&{}\ge \frac{cn}{d_{\rm avg}}\\
\trel&{}\ge \frac{1-\m(\sum_x\o_x=1)}{pd_{\rm avg}}.
\label{eq:lower:easy:trel}
\end{align}
\item\label{lower:main} If $p_n=O(1/n)$, then we have the stronger bound
\begin{align}
\label{eq:lower:main}\csob&{}\ge \tmeet\O(1+|\log(np_n)|)\\
\trel&{}\ge \tmeet\O(1).
\label{eq:lower:main:trel}
\end{align}
\end{enumerate}
\end{mainthm}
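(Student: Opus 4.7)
The proof naturally splits into upper bounds (parts \ref{upper:easy} and \ref{upper:main}) and lower bounds (parts \ref{lower:easy} and \ref{lower:main}). The guiding intuition is that CBSEP has two separate time scales: a fast scale on which, conditional on the particle count $N=\sum_x\o_x$, the configuration diffuses through SEP moves and equilibrates essentially as simple exclusion at density $N/n$; and a slow scale on which branching and coalescing moves adjust $N$. Throughout, the main tools are the Dirichlet form decomposition \eqref{eq:def:cD} over edges and the resistance--meeting-time connection recalled in Section \ref{sec:resistance distance}.

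For the upper bounds my plan is to condition on $N$ and use the standard two-level identities
\[
  \var_\mu(f)=\mu(\var_\mu(f\tc N))+\var_\mu(\mu(f\tc N)),\qquad \ent_\mu(f^2)=\mu(\ent_\mu(f^2\tc N))+\ent_\mu(\mu(f^2\tc N)).
\]
The inner terms concern CBSEP restricted to $\{N=k\}$, which as far as the non-trivial moves go is simple symmetric exclusion on $G$ at density $k/n$; its logarithmic Sobolev and Poincar\'e constants can be bounded in terms of the lazy random walk mixing time $\tmix[rw]$ by the Diaconis--Saloff-Coste comparison method, and once one absorbs the non-uniform rates imposed by the conditioning on $E_e$, this produces the first term in \eqref{eq:upper:main} together with the degree ratio $d_{\rm avg}d_{\rm max}^2/d_{\rm min}^2$. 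The outer projected chain on $N$ is analysed via the Sudbury--Lloyd duality with the biased voter model: it is comparable to a birth--death chain whose rates are governed by meeting times of random walks, whose logarithmic Sobolev and relaxation constants are of order $n\max_y\bar{\cR}_y|\log p_n|$ and $n\max_y\bar{\cR}_y$ respectively, yielding the second term of \eqref{eq:upper:main} and the bound \eqref{eq:upper:main:trel}. In regime \ref{upper:easy} with $p_n=\O(1)$, the $|\log p_n|$ factor collapses and a direct comparison of $\mu$ with the Bernoulli product $\pi$ (logarithmic Sobolev $O(n)$, spectral gap $O(1)$) delivers \eqref{eq:upper:easy}--\eqref{eq:upper:easy:trel}.

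For the easy lower bounds \ref{lower:easy} a natural test function is $f=\1_{\{N=1\}}$: its variance equals $\mu(N=1)(1-\mu(N=1))$, while $\cD(f)$ only receives contributions from single-to-two-particle transitions, whose total rate at a single-particle configuration is $O(p\,d_x)$; after summing and using detailed balance this gives $\cD(f)=O(p\,d_{\rm avg}\mu(N=1))$, and substituting into the variational characterisation \eqref{eq:def:trel} produces \eqref{eq:lower:easy:trel}, with \eqref{eq:lower:easy} coming from the analogous entropy computation. For the sharper bounds \ref{lower:main} I would use the attractiveness of CBSEP and the random walk embedding mentioned in Section \ref{subsec:CBSEP}: starting from a single particle, the time required to produce two well-separated particles and reach stationarity is at least $\O(\tmeet)$ (essentially the time-reversal of the meeting time), and a conductance bound against the ``few-particle'' set then yields $\trel\ge c\tmeet$. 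The extra $|\log(np_n)|$ factor in \eqref{eq:lower:main} is harvested via Rothaus's inequality applied to a test function whose entropy exceeds its variance by the logarithm of the typical particle count $np_n$.

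The main technical obstacle will be the analysis of the inner conditional chain at fixed $N=k$, uniformly in $k\in\{1,\dots,n\}$ and on graphs with irregular degree. The comparison of SEP with the lazy random walk on $G$ is not tight at low density, and the awkward factor $d_{\rm avg}d_{\rm max}^2/d_{\rm min}^2$ in \eqref{eq:upper:main} originates precisely in reconciling the resampling of an edge conditional on $E_e$ with the uniform neighbour choice of the simple random walk. By contrast, the outer one-dimensional chain becomes essentially explicit once the Sudbury--Lloyd duality is invoked, and the lower bounds, though involving attractiveness arguments, follow relatively standard interacting-particle-system patterns.
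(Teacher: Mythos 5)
Your overall skeleton is the right one --- the two-level decomposition of variance and entropy over the particle number $N$, with the inner conditional chain compared to exclusion dynamics (the paper does this via the Alon--Kozma comparison of SEP on $G$ with Bernoulli--Laplace on $K_n$ plus the Lee--Yau bound $O(\log n)$, which is where the $\log n$ and the degree ratio in \eqref{eq:upper:main} actually come from), and the test function $\1_{\{N=1\}}$ for part \ref{lower:easy} is exactly the paper's. But your treatment of the ``outer'' term is where the real content of the theorem lives, and it is missing. The projection of CBSEP onto $N$ is \emph{not} a Markov chain: the rate at which $N$ decreases depends on the number of doubly occupied edges, hence on the geometry of the configuration and not on $N$ alone, and the Sudbury--Lloyd thinning duality does not by itself yield a birth--death comparison with the constants you assert. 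The paper's argument has two separate ingredients you would need to supply: (i) a logarithmic Sobolev inequality with constant $O(\log(1/p))$ for an auxiliary birth--death chain reversible w.r.t.\ the law of $N$ (binomial conditioned to be positive), whose Dirichlet form evaluated at $g=\mu(f^2\tc N)^{1/2}$ is then controlled, via two Cauchy--Schwarz steps exploiting the binomial ratio identity, by the \emph{single-site} quantity $p\sum_y\mu([f(\o^y)-f(\o)]^2(1-\o_y))$; and (ii) the electrical-flow comparison (Proposition \ref{prop:electricity}) showing that this single-site quantity is at most $4n\max_y\bar\cR_y\,\cD(f)$, obtained by routing a flip at $x$ along the optimal unit flow from $x$ to an occupied vertex. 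Step (ii) is the only place where $\max_y\bar\cR_y$ enters, and nothing in your sketch produces it; it also single-handedly gives \eqref{eq:upper:main:trel} once combined with a two-block argument.

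For part \ref{lower:main} there is a second genuine gap: a conductance bound against the few-particle set cannot give $\trel\ge\tmeet\O(1)$. The bottleneck ratio of $\{N=1\}$ is of order $pd_{\rm avg}$, so that route reproduces only the easy bound \eqref{eq:lower:easy:trel}. The paper instead considers the smallest eigenvalue $\l_0$ of $-\cL$ with Dirichlet condition on $\{N=1\}$, uses $\l_0^{-1}\ge\bbE_{\mu(\cdot\tc N\ge2)}(\t)$ with $\t$ the hitting time of $\{N=1\}$, and lower bounds this hitting time by the meeting time of two independent walks using the stochastic domination of CBSEP over purely coalescing random walks; the $|\log(np_n)|$ factor then comes from the Goel--Montenegro--Tetali inequality $\csob\ge\l_0^{-1}|\log\mu(N\ge2)|$, not from a Rothaus-type entropy-versus-variance argument. (A smaller issue: in part \ref{upper:easy}, a ``direct comparison of $\mu$ with $\pi$'' must still deal with the constraint $E_e$ in the Dirichlet form; the paper does this by quoting $\trel[FA]=O(1)$ for FA-$1$f at $p=\O(1)$ together with \eqref{eq:13}.)
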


For the reader's convenience, and in view of our application to the FA-$1$f KCM, we detail the above bounds for the graphs discussed in Remark
\ref{rem: resistance} when $p_n=\Theta(1/n)$.

\begin{cor}
  \label{cor: 1}
In the setting of Theorem \ref{mainthm 1} assume that
$p_n=\Theta(1/n)$. Then:
\begin{enumerate}[label=(\arabic*)]
\item\label{case1} hypercube:
\[\Theta\left(\frac{n}{\log n}\right)=2\trel\le \csob\le O(n),\]
\item\label{case2} regular $b$-ary tree, $b\ge 2$ independent of $n$:
\[\Theta(n\log(n))=2\trel\le \csob\le O(n \log^2(n)),\]
\item\label{case3} uniform random $d$-regular graph, $d$ independent of $n$: w.h.p.
\[\Theta(n)=2\trel\le \csob\le O(n \log(n)),\]
\item\label{case4} discrete torus $\bbT_n^d$ with $d$ independent of $n$: 
\begin{equation*}
  \csob\le   O(1)\times\begin{cases}
  n^2\log(n)
& d=1,\\
  n\log^2(n)& d=2,\\
  n\log(n)& d\ge 3,
  \end{cases}
  \end{equation*}
and
\[
  \a^{-1}\ge 2\trel=\Theta(1)\times\begin{cases}
    n^2 & d=1,\\
    n\log(n) & d=2,\\
    n & d\ge 3.
  \end{cases}
  \]
\end{enumerate}
\end{cor}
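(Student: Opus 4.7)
The plan is to obtain each of the four cases as a direct corollary of Theorem \ref{mainthm 1}\ref{upper:main} and \ref{lower:main}, substituting the values of $\max_y \bar{\cR}_y$ recorded in Remark \ref{rem: resistance}, standard asymptotics for $\tmix[rw]$ and $\tmeet$ on each graph family, and using the scaling $p_n=\Theta(1/n)$, which makes $|\log p_n|=\Theta(\log n)$ and $|\log(np_n)|=O(1)$. In every case the graph is regular (random $d$-regular graphs by assumption, the others by construction), so the prefactor $d_{\rm avg}d_{\rm max}^2/d_{\rm min}^2$ appearing in \eqref{eq:upper:main} collapses to $d_{\rm max}$.

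For the upper bound on $\csob$, I would use \eqref{eq:upper:main} and verify case-by-case that the second term $(\max_y\bar \cR_y)n|\log p_n|=\Theta(n\log(n)\max_y\bar\cR_y)$ dominates the first. This uses the standard estimates $\tmix[rw]=\Theta(\log n\log\log n)$ for the hypercube, $\tmix[rw]=\Theta(n)$ for the regular $b$-ary tree, $\tmix[rw]=O(\log n)$ w.h.p.\ for the random $d$-regular graph (via Friedman's spectral gap bound), and $\tmix[rw]=\Theta(n^{2/d})$ for $\bbT_n^d$. A short arithmetic comparison of $d_{\rm max}\tmix[rw]\log n$ against $n\log n\cdot\max_y\bar\cR_y$ in each case yields exactly the announced upper bounds (with the two terms tying only in the one-dimensional torus). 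The upper bound on $\trel$ is then immediate from \eqref{eq:upper:main:trel}.

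For the matching lower bound on $\trel$, since $p_n=O(1/n)$ I would invoke \eqref{eq:lower:main:trel} to get $\trel\ge \Omega(\tmeet)$, and substitute the well-known meeting-time asymptotics: $\tmeet=\Theta(n/\log n)$ on the hypercube, $\Theta(n\log n)$ on the $b$-ary tree, $\Theta(n)$ on random $d$-regular graphs (w.h.p.), and the classical values $\Theta(n^2),\Theta(n\log n),\Theta(n)$ on $\bbT_n^d$ for $d=1,2,\ge 3$ respectively. These meeting-time values can alternatively be read off from $\tmeet\asymp n\max_y\bar \cR_y$ on vertex-transitive graphs (see e.g.\ \cite{Aldous02a}) combined with Remark \ref{rem: resistance}. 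The lower bound on $\csob$ in each case is then forced by $\csob\ge 2\trel$ from \eqref{eq:2}, and indeed for the torus in Remark \ref{rem: resistance} this yields precisely the $\a^{-1}\ge 2\trel$ entries stated.

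The work is essentially bookkeeping rather than conceptual: the only thing to watch is that, in each family, the logarithmic correction in the first term of \eqref{eq:upper:main} is not large enough to beat the resistance term, which is the place where the minor parameter choices (e.g.\ $b\ge 2$ and $d$ independent of $n$) enter. Once these comparisons are done, the four-way table in the corollary follows by directly reading off Remark \ref{rem: resistance} and the classical meeting-time tables.
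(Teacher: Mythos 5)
Your proposal is correct and follows essentially the same route as the paper: apply \eqref{eq:upper:main}--\eqref{eq:upper:main:trel} with the resistances from Remark \ref{rem: resistance}, check via the standard $\tmix[rw]$ asymptotics that the second term in \eqref{eq:upper:main} dominates, and combine \eqref{eq:lower:main:trel} with $\tmeet=\Theta(n)\max_y\bar\cR_y$ and \eqref{eq:2} for the lower bounds. The only (immaterial) slip is the claim that all four families are regular---the $b$-ary tree is not, but since $b=O(1)$ the degree prefactor in \eqref{eq:upper:main} is still $O(1)$ there, so the bookkeeping goes through unchanged.
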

The corollary follows immediately from Theorem \ref{mainthm 1}\ref{upper:main} and \ref{lower:main} together with Remark \ref{rem: resistance}, the well-known results on \tmix[rw] for each graph and the fact that  (see
\cites{Aldous91,Chandra89,Kanade19}) for the graphs in Remark \ref{rem: resistance} it holds that
\[\tmeet=\Theta(n)\max_y \bar\cR_y.\]
Indeed, the upper bounds on \tmix[rw] are only needed to see that for these graphs the maximum in the r.h.s.\ of \eqref{eq:upper:main} is achieved by the second term.
Using Corollary \ref{cor: 1} together with \eqref{eq:4} and
\eqref{eq:13}, we immediately get the following consequences for the
FA-$1$f KCM to be compared with the r.h.s. of \eqref{eq:PS}.
\begin{cor}
\label{cor: FA}Consider the FA-$1$f KCM on $G=\bbT_n^d$ with parameter
$p_n=\Theta(1/n)$ and let $\tmix[FA]$ and $\ttwo[FA]$ be its mixing time and
$\ell^2$-mixing time respectively. Then
\begin{align}
\label{eq:PS:improvement}
  \tmix[FA]&\le \ttwo[FA]\le \csob[FA]\log (n)\le c n
             \log(n)\a^{-1}\le O(1)\times   \begin{cases}
    n^3\log^2(n) & d=1\\
    n^2\log^3(n) & d=2\\
  n^2\log^2(n)  & d\ge 3.
  \end{cases}
  \end{align}
\end{cor}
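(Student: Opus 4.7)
The plan is to chain together four ingredients all already available in the excerpt. First, I would use the standard inequality $\tmix[FA]\le\ttwo[FA]$ valid for any reversible Markov chain. Next, applying \eqref{eq:4} to the FA-$1$f chain yields $\ttwo[FA]\le\csob[FA]\bigl(1+\tfrac 14 \log\log(1/\mu_*)\bigr)$, so it remains to estimate $\mu_*=\min_{\o\in\O_+}\mu(\o)$ in our regime. Since $\mu=\pi(\cdot\tc \O_+)$ with $\pi$ the Bernoulli$(p_n)$-product measure and $p_n=\Theta(1/n)$, the minimum is attained at the all-ones configuration, giving $\mu_*=\Theta(p_n^n)$, hence $\log(1/\mu_*)=\Theta(n\log n)$ and $\log\log(1/\mu_*)=O(\log n)$. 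This is precisely the $\log n$ factor appearing in the second inequality of the displayed chain.

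The third step is to compare the two Dirichlet forms via \eqref{eq:13}, whose upper half reads $\cD(f)\le c\, d_{\rm max}\,p_n^{-1}\cD[FA](f)$. On $\bbT_n^d$ one has $d_{\rm max}=2d=O(1)$ while $p_n^{-1}=\Theta(n)$, so $\cD(f)\le c_d\, n\,\cD[FA](f)$ for every $f:\O_+\to\bbR$. Because CBSEP and FA-$1$f share the same reversible measure $\mu$, the variational definition \eqref{eq:def:csob} of the logarithmic Sobolev constant immediately yields $\csob[FA]\le c_d\, n\,\csob$, which is the third inequality in the chain. Combining the three preceding inequalities gives $\tmix[FA]\le\ttwo[FA]\le O(n\log n)\,\csob$, and the three cases of Corollary~\ref{cor: 1}\ref{case4} (namely $\csob\le O(n^2\log n)$ for $d=1$, $\csob\le O(n\log^2 n)$ for $d=2$, and $\csob\le O(n\log n)$ for $d\ge 3$) then produce the stated bounds $O(n^3\log^2 n)$, $O(n^2\log^3 n)$ and $O(n^2\log^2 n)$ respectively.

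The argument is essentially an assembly of estimates already proved in the paper, so I do not anticipate a real obstacle. The only point that needs a brief verification is the computation of $\log\log(1/\mu_*)=O(\log n)$ in the second step, which is what absorbs exactly one of the two extra logarithmic factors separating $\csob$ from $\tmix[FA]$; the other extra $\log n$ comes from the factor $p_n^{-1}=\Theta(n)$ in the Dirichlet comparison between CBSEP and FA-$1$f. Everything else is a direct appeal to Corollary~\ref{cor: 1}.
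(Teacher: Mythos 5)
Your proposal is correct and follows essentially the same route as the paper, which derives the corollary by combining \eqref{eq:4}, \eqref{eq:13} and Corollary~\ref{cor: 1}; your explicit verification that $\log\log(1/\mu_*)=O(\log n)$ and that $d_{\rm max}p_n^{-1}=O(n)$ on $\bbT_n^d$ just spells out the details the paper leaves implicit.
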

\begin{rem}Our results in $d\ge 2$, besides being more directly proved than in \cites{Pillai17,Pillai19}, hold in the stronger
logarithmic Sobolev sense, and extend to other graphs, e.g.\ all the graphs discussed in
Corollary \ref{cor: 1}. Furthermore, contrary to the approach followed 
in \cites{Pillai17,Pillai19},
our methods can be easily adapted to cover other regimes of $p_n$. For $d=1$ the above upper bound on $\tmix[FA]$ can be proved to also be sharp up to logarithmic corrections, using the technique discussed in \cite{Cancrini08}*{Section 6.2}.
\end{rem}

Our second theorem concerns the total variation mixing time of the generalised model, {$g$-{CBSEP}}.

Let $\t_{\rm cov}$ denote the cover time of the simple random walk on $G$
(see e.g.\ \cite{Levin09}*{Chap. 11} and also \cite{Chandra89} for a close connection between the average cover time and the resistance distance), and let
\[\tcov=\inf\left\{t>0:\max_{x\in V} \bbP_x(\t_{\rm cov}>t)\le 1/e\right\}.
\]

\begin{mainthm}
\label{mainthm 3} Consider {$g$-{CBSEP}} on a finite connected graph $G$ of minimum degree $d_{\rm min}$ with parameter
$p=\rho(S_1)$ and let $\tmix$ be the mixing time of {CBSEP} on $G$ with
parameter $p$. Then there
exists a universal constant $c>0$ such that
\[
\tmix \le \tmix[\text{$g$-CBSEP}]\le c (\tmix + \tcov/d_{\rm \min}).
\]
\end{mainthm}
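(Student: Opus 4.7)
The lower bound $\tmix\leq \tmix[\text{$g$-CBSEP}]$ is the standard data-processing inequality: the projection $\f$ intertwines {$g$-{CBSEP}} with {CBSEP}, so the time-$t$ law of {$g$-{CBSEP}} from $\o$, pushed forward under $\f$, equals the time-$t$ law of {CBSEP} from $\f(\o)$, and pushforwards only contract total variation distance from the respective equilibria.

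For the upper bound I couple two copies of {$g$-{CBSEP}}, one started at an arbitrary $\o$ and the other at the stationary measure $\rho_+$, via a common rate-$1$ Poisson clock on each edge $e=\{x,y\}$. At each ring I decompose the resample from $\rho_x\otimes\rho_y(\cdot\tc E_e^{(g)})$ as (i) drawing the projected pair $(\f(\o_x'),\f(\o_y'))$ from $\pi_e(\cdot\tc E_e)$, a pure {CBSEP} step, and (ii) drawing the fresh internal states from $\rho_x(\cdot\tc S_{\f(\o_x')})\otimes\rho_y(\cdot\tc S_{\f(\o_y')})$. Step (i) is coupled through the monotone grand coupling of {CBSEP}, whose coalescence time is $O(\tmix)$ by attractiveness, and step (ii) through a common quantile-function map applied to independent uniform random variables. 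By time $O(\tmix)$ the two {CBSEP} projections coalesce with probability at least $1-1/(4e)$ and remain so; from that point every resample on $\{x,y\}$ produces the same $(\f(\o_x'),\f(\o_y'))$ in the two copies and hence, by the common-quantile construction in (ii), also the same $(\o_x',\o_y')$.

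It remains to show that within a further time $O(\tcov/d_{\rm min})$ every vertex $x\in V$ is touched by at least one edge resample. For this I exploit the random walk $W_t\in V$ embedded in {CBSEP}, which has a particle present at $W_t$ at all times. While $W_t=x$, every incident edge carries its rate-$1$ clock and $W_t$ leaves $x$ via the first ring whose resample removes the tagged particle. Direct inspection of the possible edge transitions shows that, uniformly in the surrounding configuration, the rate at which $W_t$ moves along any fixed incident edge is bounded below by $(1-p)/(2-p)=\Theta(1)$ for $p\leq 1/2$; consequently $W_t$ stochastically dominates a continuous-time simple random walk on $G$ with constant per-edge rate, whose cover time is at most $\tcov/d_{\rm min}$ because a step at $x$ has mean duration $1/d_x\leq 1/d_{\rm min}$. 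Each visit of $W_t$ to $x$ ends with a resample of some incident edge, which under the common-quantile coupling matches $\o_x$ in the two copies, so the coupling is complete by time $O(\tmix+\tcov/d_{\rm min})$, proving the bound. The principal obstacle is pinning down the embedding precisely across branching and coalescence events so that the \emph{always a particle at $W_t$} property and the uniform lower bound on jump rates hold simultaneously; once that is done, the cover-time comparison and the final failure-probability accounting are routine.
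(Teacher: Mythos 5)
Your lower bound (projection plus contraction of total variation under pushforward) and your cover-time mechanism (an embedded random walk $W_t$ carrying a particle, jumping to a uniform neighbour at rate $d_{W_t}(1-p)/(2-p)$, so that its cover time is $O(\tcov/d_{\rm min})$ and every vertex it visits gets its internal state refreshed) both match the paper. The gap is in how you handle the projected coordinates. You assert that the monotone grand coupling of {CBSEP} coalesces two arbitrary initial conditions (or an arbitrary one and a stationary one) by time $O(\tmix)$ ``by attractiveness''. This does not follow: the coalescence time of a grand coupling \emph{upper-bounds} the mixing time, not conversely. The standard way to reverse the implication for a monotone chain is to compare each copy with the copy started from the maximal state $\hat 1$ and write $\bbP\left(\o^{\hat 1}(t)\neq\o^{\o}(t)\right)\le\sum_{x}\left(\bbE\left[\o^{\hat 1}_x(t)\right]-\bbE\left[\o^{\o}_x(t)\right]\right)\le 2n\max_{\o}\|P^t_\o-\mu\|_{\rm TV}$, which forces $t=\Theta(\tmix\log n)$ before the right-hand side is small. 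So even after patching, your route only yields $\tmix[\text{$g$-CBSEP}]\le c\left(\tmix\log n+\tcov/d_{\rm min}\right)$, strictly weaker than the stated theorem.

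The paper sidesteps coupling the projections altogether. It inserts the conditionally equilibrated measure $\nu^\eta=\rho(\cdot\tc\f(\o)=\eta)$ into a triangle inequality. Because $\rho$ is product, the chain started from $\nu^\eta$ stays of the form $\nu^{\eta_t}$ averaged over the {CBSEP} trajectory (Claim \ref{claim:law}), so $\left\|\rho_t^{\nu^\eta}-\rho_+\right\|_{\rm TV}=\|\mu_t^\eta-\mu\|_{\rm TV}$ \emph{exactly}, and this term costs precisely one {CBSEP} mixing time with no coupling and no logarithmic loss. The remaining term compares two {$g$-{CBSEP}} copies with the \emph{same} projection; under the graphical construction their projections agree deterministically for all time, and only the internal states need refreshing, which is exactly what your cover-time argument delivers. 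If you want to rescue your plan, replace the ``couple with stationarity'' step by this decomposition through $\nu^\eta$; otherwise you must supply a genuinely new argument that the {CBSEP} grand coupling coalesces in $O(\tmix)$, which attractiveness alone does not give.
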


The main reason to bound the total variation
mixing time of {$g$-{CBSEP}}, instead of the $\ell^q$-mixing times as for {CBSEP}, is that the scaling
of the logarithmic Sobolev constant for {$g$-{CBSEP}} is very
different from that of the {CBSEP}, as the following example shows. 
\begin{ex}
Let $G=\bbT_n^2$, $p_n=1/n$, $S=\{0,1,2\}$, and $\rho(1)=p, \rho(0)=\rho(2)=(1-p)/2.$ Then, 
\begin{equation}
  \label{eq:6}
\csob[\text{$g$-CBSEP}]=n^{3/2+o(1)}.  
\end{equation}
In the same setting Corollary \ref{cor: 1} gives $\a^{-1}=O(n\log^2 (n))$. To prove \eqref{eq:6} it is enough to take as test function in the logarithmic Sobolev inequality for {$g$-{CBSEP}} the indicator that a vertical strip of width $\lfloor \sqrt{n}/2\rfloor $ of the torus is in state $0$.
\end{ex}

\section{{CBSEP}---Proof of Theorem \ref{mainthm 1}}
\label{sec:main1}
For this section we work with {CBSEP} in the setting of Theorem \ref{mainthm 1} and abbreviate $p=p_n$. In the sequel $c$ shall denote an absolute constant whose value may
change from line to line.

\subsection{Upper bounds---Proof of Theorem \ref{mainthm 1}\ref{upper:easy} and \ref{mainthm 1}\ref{upper:main} }
Let us first prove the easy upper bound Theorem \ref{mainthm 1}\ref{upper:easy}, assuming that $p=\O(1)$. We know from \cite{Cancrini09}*{Theorem 6.4} that $\trel[FA]=O(1)$. Recalling \eqref{eq:13} and the definition of the relaxation time, we get that for {CBSEP} $\trel=O(1)$, yielding \eqref{eq:upper:easy:trel}. By \eqref{eq:2} this gives $\csob=O(n)$ and concludes the proof of Theorem \ref{mainthm 1}\ref{upper:easy}.

The rest of this section is dedicated to the proof of the main upper bound---Theorem \ref{mainthm 1}\ref{upper:main}. The starting point is the following decomposition of the entropy of any $f:\O_+\to \bbR$
      \begin{equation}
        \label{eq:5}
      \ent(f^2)= \mu\left(\ent(f^2\tc N)\right) + \ent\left(\mu(f^2\tc N)\right),
            \end{equation}
where $N(\o)=\sum_{x\in V}\o_x$ is the number of particles and $\ent(f^2\tc N)$ is the entropy of $f^2$ w.r.t.\ the conditional measure $\mu(\cdot\tc N)$ (see \eqref{eq:def:csob}).
The first term in the r.h.s.\ above is bounded from above using the logarithmic Sobolev constant of the SEP on $G$ with a fixed number of particles.
    \begin{prop}
    \label{prop:step1}
      There exists an absolute constant $c>0$ such that
      \[
        \mu\left(\ent(f^2\tc N)\right) \le c \log(n)
        \frac{d_{\rm avg}d_{\rm max}^2}{ d_{\rm min}^2} \tmix[rw]\cD(f),
      \]
   \end{prop}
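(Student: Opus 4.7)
The plan is to treat the conditional entropy term in \eqref{eq:5} by freezing the particle number $N$. For each $k\ge 1$ the conditional measure $\mu(\cdot\tc N=k)$ is the uniform distribution on $k$-particle configurations and is reversible for the simple symmetric exclusion process (SEP) on $G$ with $k$ particles, in which each edge $\{x,y\}\in E$ swaps $\o_x$ and $\o_y$ at rate one. Denote by $\a_k$ its logarithmic Sobolev constant and by
\[
\cD_k(f)=\tfrac12\sum_{e=\{x,y\}\in E}\mu\left(\1_{\{\o_x\ne\o_y\}}(f(\o^{xy})-f(\o))^2\tc N=k\right)
\]
its Dirichlet form, where $\o^{xy}$ is $\o$ with the states at $x$ and $y$ swapped.

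The main step is a uniform-in-$k$ bound on $\a_k^{-1}$. For SEP on the complete graph $K_n$---the Bernoulli--Laplace diffusion---the inverse logarithmic Sobolev constant is known to be $O(\log n)$. I would then use the Diaconis--Saloff-Coste comparison of Dirichlet forms: every $K_n$-swap is decomposed into a sequence of adjacent $G$-swaps along a path, and the squared increment is controlled through the path by Cauchy--Schwarz. Taking the paths to be typical lazy random walk trajectories of length $O(\tmix[rw])$ and controlling their multiplicity through the near-stationarity of the walk at that time scale, a direct computation yields, after bookkeeping of the degree-induced rate ratios between $K_n$ and $G$, a congestion bound of order $\tmix[rw]\,d_{\rm avg}d_{\rm max}^2/d_{\rm min}^2$. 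Combined with the bound on $K_n$ this gives
\[
\a_k^{-1}\le c\log(n)\,\frac{d_{\rm avg}d_{\rm max}^2}{d_{\rm min}^2}\,\tmix[rw]
\]
uniformly in $k\ge1$.

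It remains to assemble the pieces. Applying the log-Sobolev inequality for each $k$ and summing,
\[
\mu(\ent(f^2\tc N))\le\max_k\a_k^{-1}\,\sum_{k\ge1}\mu(N=k)\cD_k(f).
\]
The right-hand sum equals $\tfrac12\sum_{e=\{x,y\}\in E}\mu((f(\o^{xy})-f(\o))^2\1_{\{\o_x\ne\o_y\}})$. Since under $\pi_e(\cdot\tc E_e)$ the states $(0,1)$ and $(1,0)$ each have probability $(1-p)/(2-p)\ge 1/3$ when $p\le 1/2$, the elementary bound $\var_\nu(g)\ge\nu(a)\nu(b)(g(a)-g(b))^2$ applied to $\nu=\pi_e(\cdot\tc E_e)$ and to the pair $a=(0,1),b=(1,0)$ yields $\var_e(f\tc E_e)\ge((1-p)/(2-p))^2(f(\o^{xy})-f(\o))^2$ whenever $e$ carries exactly one particle in $\o$. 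Comparing with the representation \eqref{eq:def:cD} of the CBSEP Dirichlet form gives $\sum_{k\ge 1}\mu(N=k)\cD_k(f)\le O(1)\cdot\cD(f)$, which closes the argument.

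The principal difficulty lies in the uniform-in-$k$ bound on $\a_k^{-1}$: the comparison template is classical, but extracting the precise degree factors in the statement requires a careful choice of path family---random walk paths rather than geodesics---and careful bookkeeping of the rate normalisations between $G$ and $K_n$. Alternatively, one may invoke existing log-Sobolev estimates for SEP on general graphs and verify that they deliver the stated dependence on $d_{\rm min}$, $d_{\rm max}$, and $d_{\rm avg}$.
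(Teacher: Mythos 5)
Your architecture is exactly the paper's: decompose by the particle number, bound the conditional log-Sobolev constant of SEP at fixed $k$ by transferring the Lee--Yau $O(\log n)$ bound for the Bernoulli--Laplace model on $K_n$ through a Dirichlet-form comparison with SEP on $G$, and finally dominate the fixed-$k$ SEP Dirichlet forms by $\cD(f)$ using that the two single-particle states of an edge each carry probability $(1-p)/(2-p)$ under $\pi_e(\cdot\tc E_e)$. The assembly step and the final SEP-to-CBSEP comparison are correct and match the paper.

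The one place where your write-up falls short of a proof is the step you yourself flag as the principal difficulty: the comparison $\cD[BL][K_n](f)\le c\,\frac{d_{\rm avg}d_{\rm max}^2}{d_{\rm min}^2}\,\tmix[rw]\,\cD[SEP][G](f)$. This is not a ``direct computation'' with random-walk paths. In the exclusion setting a $K_n$-transposition cannot simply be routed along a path of adjacent swaps, since the intermediate particles are disturbed and must be restored (the caterpillar construction), and the congestion of a family of random-walk trajectories through a fixed edge is not controlled merely by ``near-stationarity at time $\tmix[rw]$'': one must build a balanced flow and handle the fact that a walk of length $O(\tmix[rw])$ need not terminate at the prescribed endpoint. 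Obtaining precisely the factor $\tmix[rw]\,d_{\rm avg}d_{\rm max}^2/d_{\rm min}^2$ (rather than, say, a diameter-squared congestion bound from geodesic paths) is the content of a recent nontrivial theorem of Alon and Kozma, which is exactly what the paper invokes at this point. So either you cite that result---in which case your proof is complete and coincides with the paper's---or the sketch has a genuine gap at this step; your closing ``alternatively, invoke existing estimates'' is the right escape hatch, but as written the derivation itself does not go through.
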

    \begin{proof}
Let
      \[
        \cD_G^{\rm SEP}(f) =
       \frac 12 \sum_{e\in E}\mu\left(\left(f(\o^e)-f(\o)\right)^2\right),
      \]
      where $\o^e$ is the
  configuration obtained from $\o$ by swapping the states at the
  endpoints of the edge $e,$ denote the Dirichlet form of the symmetric
  simple exclusion process on $G.$ Similarly let
  \[
    \cD_{K_n}^{\rm
      BL}(f)=\frac{1}{2n}\sum_{e\in E(K_n)}\mu\left(\left(f(\o^{e})-f(\o)\right)^2\right)
  \]
be the Dirichlet form of the Bernoulli-Laplace process on the complete graph
$K_n$. The main result of \cite{Alon20}*{Theorem 1} implies
that\footnote{Actually the comparison result proved in \cite{Alon20} is much stronger, since it concerns weighted exchange processes on $G$ and on $K_n$.} 
\[
\cD[BL][K_n](f)\le  c
\frac{2|E|}{n}\frac{d_{\rm max}^2}{ d_{\rm min}^2 } \tmix[rw]\cD[SEP][G](f). 
  \]
Using $2|E|=\sum_x d_x$ we get, in particular, that
  \[
    \cD[BL][K_n](f)\le  c \frac{d_{\rm avg}d_{\rm max}^2}{ d_{\rm min}^2}  \tmix[rw]\cD[SEP][G](f).
  \]
On other hand, the logarithmic Sobolev constant of the Bernoulli-Laplace process on
$K_n$ with $k\in \{1,\dots,n-1\}$ particles is bounded by $c\log n$ uniformly in $k$ \cite{Lee98}*{Theorem 5}. Hence,
\[\mu\left(\ent(f^2\tc N)\right)\le c\log(n) \cD[BL][K_n](f) \le c
  \log(n) \frac{d_{\rm avg}d_{\rm max}^2}{ d_{\rm min}^2 } \tmix[rw] \cD[SEP][G](f).\]
The proposition then follows using $p\le 1/2$ and
  \[
    \cD[SEP][G](f)\le \frac{(2-p)}{(1-p)} \cD(f).\qedhere
    \]
    \end{proof}
 We now examine the second term $\ent(\mu(f^2\tc N))$ in the r.h.s.\ of \eqref{eq:5}. Let
    \[
      g(k):=
      \mu\left(f^2\tc N=k\right)^{1/2}
    \]
for $k\ge 1$, so that $\ent(\mu(f^2\tc N))=\ent_\g(g^2),$ where $\g$ is the probability law of $N$ on $\{1,\dots,n\}$. Clearly, $\g$ is ${\rm Bin}(n,p)$ conditioned to be positive, so that for any $2\le k\le n$ we have
\begin{equation}
\label{eq:bin:ratio}
\g(k)(1-p)k=\g(k-1)p(n-k+1).
\end{equation}
 \begin{prop}
 \label{prop:ent}
There exists an absolute constant $c>0$ such that 
\[     \ent_\g(g^2)\le c \log(1/p)\times p\sum_{y\in V}
    \mu\left(\left[f(\o^y)-f(\o)\right]^2 (1-\o_y)\right).\]
 where we recall that $\o^y$ denotes the configuration $\o$ flipped at $y$.
 \end{prop}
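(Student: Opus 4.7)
My plan is to reduce the left-hand side to a one-dimensional quantity via a logarithmic Sobolev inequality for the marginal law $\g$ of $N$ under $\m$, and then convert the resulting ``add a particle'' Dirichlet form into the single-site-flip expression on the right-hand side via a simple coupling.

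\emph{Step 1 (LSI for $\g$).} I would first establish
\[
\ent_\g(g^2) \le c\log(1/p)\cdot p\cdot \m\!\left[(n-N)(g(N{+}1)-g(N))^2\right].
\]
To this end, extend $g$ to $\{0,1,\dots,n\}$ by setting $g(0):=g(1)$ and apply the classical Diaconis--Saloff-Coste LSI for the product Bernoulli measure $\p$ (which has constant $\Theta(\log(1/p))$ for $p\le 1/2$) to $\tilde F(\o):=g(N(\o))$. The extension is designed so that all ``boundary'' contributions to $\cD_\p(\tilde F)$ coming from flips across $\{0\}\leftrightarrow\{e_x:x\in V\}$ vanish identically. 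The remaining terms collapse, via the reversibility identity $\p(\o^x)/\p(\o)=p/(1-p)$ for a $0\to 1$ flip (which re-expresses the ``remove'' part of the Dirichlet form as $\frac{p}{1-p}$ times the ``add'' part), to $\cD_\p(\tilde F)=p\,\p(\O_+)\,\m[(n-N)(g(N{+}1)-g(N))^2]$. The convexity of $x\mapsto x\log x$ furthermore yields $\ent_\p(\tilde F^2)\ge \p(\O_+)\,\ent_\g(g^2)$, so the factors $\p(\O_+)$ cancel and the claimed bound follows.

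\emph{Step 2 (Coupling and $L^2$ triangle inequality).} The key observation is that if $\s\sim\m(\cdot\tc N=k)$ and $y$ is drawn uniformly from $\{z:\s_z=0\}$, then $\s^y\sim\m(\cdot\tc N=k{+}1)$. Indeed, $\m(\cdot\tc N=k)$ is uniform on $k$-particle configurations, and for any target $\t$ with $N(\t)=k{+}1$ one computes directly $\bbP(\s^y=\t)=(k{+}1)/(\binom{n}{k}(n-k))=1/\binom{n}{k+1}$. Writing $g(k)=\|f(\s)\|_{L^2}$ and $g(k{+}1)=\|f(\s^y)\|_{L^2}$ under this coupling, the $L^2$ triangle inequality yields $|g(k{+}1)-g(k)|\le \|f(\s^y)-f(\s)\|_{L^2}$, i.e.
\[
(n-k)(g(k{+}1)-g(k))^2 \le \m\!\left(\sum_{y\in V}(1-\o_y)(f(\o^y)-f(\o))^2 \,\Big|\, N=k\right).
\]
Taking $\g$-expectation of this inequality gives $\m[(n-N)(g(N{+}1)-g(N))^2]\le \sum_y\m((1-\o_y)(f(\o^y)-f(\o))^2)$, which combined with Step~1 delivers the proposition.

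I expect the most substantive work to be the bookkeeping in Step~1: one must carefully track the boundary terms of $\cD_\p$ across $\O_+^c=\{0\}$ under the extension $g(0)=g(1)$, apply the reversibility identity to re-express the ``remove'' part of the Dirichlet form in terms of the ``add'' part, and then use convexity to compare $\ent_\p(\tilde F^2)$ with $\ent_\g(g^2)$. Once these cancellations are established, Step~2 is an essentially mechanical coupling argument combined with the $L^2$ triangle inequality.
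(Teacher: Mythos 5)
Your proof is correct, and it takes a genuinely different --- and leaner --- route than the paper's. The paper first proves Lemma \ref{lem:birth-death}, a logarithmic Sobolev inequality for the birth--death chain on $\{1,\dots,n\}$ reversible w.r.t.\ $\g$, via Miclo's Hardy-type criterion; this is the lengthy computation occupying the Appendix. Your Step 1 obtains an equivalent bound (the two Dirichlet forms $p\sum_k\g(k)(n-k)[g(k{+}1)-g(k)]^2$ and $\sum_k\g(k)k[g(k)-g(k{-}1)]^2$ coincide up to the factor $1-p\in[1/2,1]$ by \eqref{eq:bin:ratio}) in a few lines, by tensorizing the two-point log-Sobolev inequality for ${\rm Ber}(p)$ (constant $\Theta(\log(1/p))$ for $p\le 1/2$), applying it to $\tilde F=g\circ N$ with the extension $g(0):=g(1)$ killing the unique boundary term at $\o=\mathbf 0$, and using $\ent_\p(\tilde F^2)\ge\p(\O_+)\ent_\g(g^2)$ so that the factors $\p(\O_+)$ cancel against $\cD_\p(\tilde F)=p\,\p(\O_+)\,\m[(n-N)(g(N{+}1)-g(N))^2]$; I checked each of these identities and they hold. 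Your Step 2 replaces the paper's Claims \ref{claim:g:diff:squared} and \ref{claim:Ak} (the algebraic identity $g^2(k-1)=g^2(k)+A_k$ followed by two applications of Cauchy--Schwarz) with the exact coupling of $\m(\cdot\tc N=k)$ and $\m(\cdot\tc N=k{+}1)$ by adding a uniformly random particle, plus the reverse triangle inequality in $L^2$; this yields the paper's bound \eqref{eq:1} (after reindexing, and with constant $1$ in place of $2$). What your approach buys is a substantial simplification --- it makes the entire Appendix unnecessary --- at the cost of using the homogeneity of $\p$ (so that $\m(\cdot\tc N=k)$ is uniform, needed for your coupling) and the sharp two-point log-Sobolev constant, both of which are available here. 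The paper's birth--death route is more self-contained at the level of the marginal chain but requires the delicate Hardy-type estimates of Lemma \ref{lem:app}.
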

 
 \begin{proof}
The proof starts with a logarithmic Sobolev inequality for $\g$
w.r.t.\ a suitably chosen reversible death and birth process on
$\{1,\dots,n\}$.
\begin{lem}
  \label{lem:birth-death}
There exists an absolute constant $c>0$ such that for any non-negative function $g:\{1,\dots,n\}\to \bbR$
\[
     \ent_\g(g^2)\le c \log(1/p)\times \sum_{k=2}^n \g(k) k [g(k)-g(k-1)]^2.\]
        \end{lem}
Leaving the tedious proof to the \hyperref[appn]{Appendix}, we move on to bounding the r.h.s.\ above for the special choice $g=\mu(f^2\tc N)^{1/2}$.
\begin{claim}
\label{claim:g:diff:squared}
For any $2\le k\le n$ we have
        \[
\left(g(k)-g(k-1)\right)^2\le \frac{A^2_k}{g^2(k-1)+ g^2 (k)},
  \]
  where
\begin{equation*}
A_k=\frac{1}{n-k+1}\sum_{y\in V} \mu\left((1-\o_y)\left[f^2(\o)-f^2(\o^y)\right]\tc N=k-1\right).
\end{equation*}
\end{claim}
        \begin{proof}
We first observe that
\begin{equation}
\label{eq:g:diff:squared}
[g(k)-g(k-1)]^2=\frac{[g^2(k)-g^2(k-1)]^2}{[g(k)+g(k-1)]^2}\le \frac{[g^2(k)-g^2(k-1)]^2}{g^2(k)+ g^2(k-1)}.
\end{equation}
       Next we write
       \begin{align*}
         g^2(k-1)&{}= \sum_{\o:\ N(\o)=k-1}\frac{\mu(\o)}{\g(k-1)}f^2(\o)\\
         &{}= \frac{1}{n-k+1}\frac{1}{\g(k-1)}\sum_{y\in V}\sum_{\o:\ N(\o)=k-1}\mu(\o)(1-\o_y)f^2(\o).
       \end{align*}
With the change of variable $\h=\o^y$ we get that the r.h.s.\ above
is equal to
\begin{multline*}
\frac{1}{n-k+1}\frac{1}{\g(k-1)}\sum_{y\in V}\sum_{\o:\
     N(\o)=k-1}\mu(\o)(1-\o_y)\left[f^2(\o)-f^2(\o^y)\right]\\
   + \frac{\g(k)(1-p)}{p(n-k+1)\g(k-1)}\sum_{y\in V}\sum_{\eta:\
     N(\eta)=k}\frac{\mu(\eta)}{\g(k)}\eta_yf^2(\eta),
     \end{multline*}
the second line being equal to $g^2(k)$ by \eqref{eq:bin:ratio}. In conclusion $g^2(k-1)= g^2(k) + A_k$ 
and the claim follows from \eqref{eq:g:diff:squared}.
\end{proof}
\begin{claim}
\label{claim:Ak}
For any $2\le k\le n$ we have
\[A_k^2 \le \frac{2\left(g^2(k-1)+g^2(k)\right)}{n-k+1}\sum_{y\in V} \mu\left(\left[f(\o)-f(\o^y)\right]^2 (1-\o_y)\tc N=k-1\right).\]
    \end{claim}
    \begin{proof}
Using $f^2(\o)-f^2(\o^y)=(f(\o)-f(\o^y))(f(\o)+f(\o^y)$ and the Cauchy-Schwarz inequality
 w.r.t.\  $\mu\left(\cdot\tc  N=k-1, \o_y=0\right)$, we get 
\begin{multline*}
A_k
\le 
\text{Av}\Big(
\mu([f(\o)-f(\o^y)]^2\tc N=k-1, \o_y=0)^{1/2} \times \\
\mu([f(\o)+f(\o^y)]^2\tc N=k-1, \o_y=0)^{1/2}\Big),
\end{multline*}
where for any $h:V\to \bbR$
\[
  \text{Av}(h):=\frac{1}{n-k+1}\sum_{y\in V}\mu\left((1-\o_y)\tc N=k-1\right)
  h(y).
\]
Another application of the Cauchy-Schwarz inequality, this time w.r.t.\ $\text{Av}(\cdot),$ gives
\begin{multline*}
  A_k^2\le 
  \frac{1}{n-k+1}\sum_{y\in V} \mu\left(\left[f(\o)-f(\o^y)\right]^2(1-\o_y)\tc
  N=k-1 \right)\times  \\
 \frac{2}{n-k+1}\sum_{z\in V} \mu\left(\left[f^2(\o)+f^2(\o^z)\right](1-\o_z)\tc
 N= k-1\right).\end{multline*}
Inside the second factor in the above r.h.s.\ the term containing $f^2(\o)$ is equal to $2\mu(f^2\tc N=k-1)=2g^2(k-1)$. Similarly, the term
containing $f^2(\o^y)$, after the change of variable $\eta=\o^y$ and recalling \eqref{eq:bin:ratio},
equals
\[
 \frac{2}{n-k+1}\frac{\g(k)k (1-p)}{p\g(k-1)}\mu\left(f^2(\eta)\tc
 N=k\right)=2g^2(k).\qedhere
\]
 \end{proof}
Combining Claims \ref{claim:g:diff:squared} and \ref{claim:Ak}, we get that 
\begin{equation}
\begin{aligned}  
        \left(g(k)-g(k-1)\right)^2&{}\le \frac{A^2_k}{g^2(k-1)+ g^2 (k)}\\
    \label{eq:1}    &{}\le
        \frac{2}{n-k+1}\sum_{y\in V} \mu\left(\left[f(\o)-f(\o^y)\right]^2 (1-\o_y)\tc N=k-1\right).
\end{aligned}
\end{equation}

Using \eqref{eq:1}  
together with \eqref{eq:bin:ratio}, we get
\begin{multline*}
\sum_{k=2}^n \g(k) k [g(k)-g(k-1)]^2\\
\begin{aligned}
\le{}& \sum_{k=2}^n \frac{2k\g(k)}{n-k+1}\sum_{y\in V} \mu\left(\left[f(\o)-f(\o^y)\right]^2 (1-\o_y)\tc N=k-1\right)\\
    ={}&\frac{2}{1-p}p\sum_{y\in V}
    \mu\left(\left[f(\o)-f(\o^y)\right]^2 (1-\o_y)\right).
\end{aligned}
\end{multline*}
Using the above bound together with Lemma \ref{lem:birth-death} we
get the statement of Proposition \ref{prop:ent}.
\end{proof}
The final step in the proof of \eqref{eq:upper:main} is the
following comparison between the quantity  $p\sum_{y\in V}
    \mu\left(\left[f(\o)-f(\o^y)\right]^2 (1-\o_y)\right) $ and the
    Dirichlet form $\cD(f)$ using electrical networks. Recall the
    definition of the resistance distance and of $\max_y\bar \cR_y$
    given in Section \ref{sec:resistance distance}.
\begin{prop}
\label{prop:electricity}
\[p\sum_{x\in V}\mu((f(\o^x)-f(\o))^2(1-\o_x))\le 4n\max_{y\in V}\bar \cR_y\times\cD(f).\]
\end{prop}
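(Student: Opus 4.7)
The plan is a canonical-flow / electrical-network comparison in the spirit of Thomson--Diaconis--Saloff-Coste bounds.  The heuristic is that flipping $\omega$ at an empty $x$ should be dominated by the cost of routing a particle from some $y$ with $\omega_y=1$ to $x$, and a Thomson unit flow from $y$ to $x$ converts the cost of this routing into the resistance $\cR_{x,y}$.  Concretely, for $\omega\in\Omega_+$ with $\omega_x=0$, any $y$ with $\omega_y=1$, and any path $\gamma=(v_0=y,\dots,v_k=x)$ in $G$, I would consider the configurations $\sigma^{(i)}:=\omega\vee\1_{v_i}$, so that $\sigma^{(0)}=\omega$ and $\sigma^{(k)}=\omega^x$.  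A direct check of the four cases $(\omega_{v_{i-1}},\omega_{v_i})\in\{0,1\}^2$ shows that each step $\sigma^{(i-1)}\to\sigma^{(i)}$ is either trivial (case $(1,1)$) or a single CBSEP move on the edge $\{v_{i-1},v_i\}$: branching for $(1,0)$, SEP for $(0,0)$, coalescing for $(0,1)$.  Setting $\Delta_{(u,v)}(\omega):=f(\omega\vee\1_v)-f(\omega\vee\1_u)$, which is antisymmetric in the edge orientation, this yields the telescoping identity $f(\omega^x)-f(\omega)=\sum_{i=1}^k \Delta_{(v_{i-1},v_i)}(\omega)$.

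For the optimal unit flow $\theta=\theta_{x,y}$ from $y$ to $x$ (of energy $\cR_{x,y}$), decomposing $\theta$ as a convex combination of unit path flows and exploiting antisymmetry of both $\theta$ and $\Delta$ gives
\[f(\omega^x)-f(\omega)=\tfrac12\sum_{\vec e\in\vec E}\theta(\vec e)\,\Delta_{\vec e}(\omega),\]
whence Cauchy--Schwarz yields $(f(\omega^x)-f(\omega))^2\le\cR_{x,y}\sum_{e\in E}\Delta_e(\omega)^2$.  For each $\omega$ I fix any $y^\star(\omega)$ with $\omega_{y^\star}=1$, apply this bound with $y=y^\star(\omega)$, sum over $x$ with $\omega_x=0$, and use $\sum_x\cR_{x,y^\star(\omega)}\le n\max_y\bar{\cR}_y$.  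Taking $\mu$-expectation reduces the problem to proving $p\sum_{e\in E}\mu(\Delta_e(\omega)^2)\le c\,\cD(f)$ for some absolute $c$.

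The remaining Dirichlet-form comparison is obtained by splitting $\Delta_e^2$ for $e=\{u,v\}$ according to $(\omega_u,\omega_v)$: the case $(1,1)$ vanishes; the cases $(1,0)$ and $(0,1)$ produce exactly the branching-pair squared differences $\{(1,0),(1,1)\}$ and $\{(0,1),(1,1)\}$ appearing in $\var_e(f\mid E_e)$ with conditional weight $(1-p)p/(2-p)^2$, so the leading $p$ cancels after the $\pi\to\mu$ normalisation; the case $(0,0)$ is handled by the identity $\omega^v=(\omega^u)^e$ on $\{\omega_u=\omega_v=0\}$ together with the change of variable $\eta=\omega^u$, which supplies an extra $(1-p)/p$ factor matching the SEP-pair weight $(1-p)^2/(2-p)^2$ in $\var_e(f\mid E_e)$ and again cancels the leading $p$.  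The conceptual core, and hence the most interesting step, is the telescoping construction based on the join $\omega\vee\1_{v_i}$, which makes each step of an \emph{arbitrary} $G$-path correspond to an honest CBSEP transition on the corresponding edge; the main technical chore is then the careful bookkeeping of $p$ versus $1-p$ weights in the last step to obtain an absolute constant matching the $4$ in the statement.
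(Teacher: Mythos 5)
Your proposal is correct and follows essentially the same route as the paper: the paper also sets $F_\o(u)=f(\o\cup\{u\})$, uses the optimal unit flow between the hole $x$ and an arbitrary particle $y_\o$ together with the summation-by-parts identity (citing \cite{Lyons16}*{Lemma 2.9} rather than a path decomposition) and Cauchy--Schwarz to get $(f(\o^x)-f(\o))^2\le \cR_{x,y_\o}\sum_{e}\Delta_e(\o)^2$, and then performs the same case analysis on $(\o_u,\o_v)$ (with the change of variable $\eta=\o^u$ in the $(0,0)$ case) to compare $p\sum_e\mu(\Delta_e^2)$ with $\cD(f)$. The only difference is presentational, so there is nothing to add.
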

\begin{proof}
We will identify $\o\in\{0,1\}^{V}$ with its set of particles
$\{x\in V: \o_x=1\}$ and we set $F_\o(u):=f(\o\cup\{u\}), u\in V.$ For
each $\vec e=(u,v)\in\vec E$ we also write $\nabla_{\vec e} F_\o:=F_\o(v)-F_\o(u)$. 
Given $x\in V$ and $\o\in\O_+$,
let $y_\o\in V$ be an arbitrarily chosen vertex such that $\o_{y_\o}=1$, and let $\theta_*$ be the
optimal (i.e.\ with the smallest energy)
unit flow from $x$ to $y_\o$. By applying \cite{Lyons16}*{Lemma 2.9}
to the function $F_\o$ and
using the
Cauchy-Schwarz inequality, we get that for any $\o\in\O_+$ and $x\in V$ such that $\o_x=0$
\begin{align*}
  (f(\o^x)-f(\o))^2 &=(F_\o(x)-F_\o(y_\o))^2\\
  &=\left(\frac 12 \sum_{\vec e\in\vec E} \theta_*(\vec e)\nabla_{\vec
    e}F_\o\right)^2\le \cE_{x,y_\o}\times  \frac 12 \sum_{\vec e\in\vec E}(\nabla_{\vec e}F_\o)^2.
  \end{align*}
  Hence,
  \[
    \sum_{x\in V} \left(f(\o^x)-f(\o)\right)^2 (1-\o_x)\le n\left(\max_{y\in V} \bar \cR_y\right)\times \frac 12 \sum_{\vec e}(\nabla_{\vec e}F_\o)^2.
  \]
  We next transform the generic term in the sum above into a Dirichlet
form term for {CBSEP}. For any $\vec e=(u,v)\in\vec E$ we have
\begin{multline*}
  p\mu(\o)(\nabla_{\vec e}F_\o)^2 \\
  =\mu(\o\cup\{u\})\times\begin{cases}0&\{u,v\}\subset\o\\
p(f(\o\cup \{v\})-f(\o))^2&u\in\o\not\ni v\\
(1-p)(f(\o\cup \{v\})-f(\o\cup \{u\}))^2& \{u,v\}\cap\o=\varnothing.
\end{cases}
 \end{multline*}
 
Comparing with the expression of $\cD(f)$, \eqref{eq:def:cD}, we get
immediately that 
\[\frac 12\sum_{\o\in\O_+}p\mu(\o) \sum_{\vec e\in\vec E}(\nabla_{\vec e}
  F_\o)^2\le 4\cD(f).\qedhere\]
\end{proof}

We are now ready to prove \eqref{eq:upper:main}. Using Proposition \ref{prop:step1} the first term in the r.h.s.\ of \eqref{eq:5} is bounded from above by 
\[
c \log(n)
        \frac{d_{\rm avg}d_{\rm max}^2}{ d_{\rm min}^2} \tmix[rw]\cD(f).
\]
In turn, Proposition \ref{prop:ent} combined with  Proposition \ref{prop:electricity} gives that the second term in the r.h.s.\ of \eqref{eq:5} is bounded from above by 
\[
c\log(1/p)\times 4n\max_{y\in V}\bar \cR_y\times\cD(f).
\]
In conclusion,
\begin{align*}
    \ent(f^2)\le c\max\left(\log(n)
        \frac{d_{\rm avg}d_{\rm max}^2}{ d_{\rm min}^2} \tmix[rw], 
        \log(1/p)\times 4n\max_{y\in V}\bar \cR_y\right)\times \cD(f),
\end{align*}
so that the best constant in the logarithmic Sobolev inequality \eqref{eq:def:csob}  satisfies \eqref{eq:upper:main}.

Turning to \eqref{eq:upper:main:trel}, Proposition \ref{prop:electricity} alone is enough to conclude. Indeed, using the two-block argument of \cite{Blondel13}*{Lemma 6.6} (see also Lemma 6.5 and Proposition 6.2 therein) and the well-known fact that the variance w.r.t.\ a product measure is at most the average of the sum of variances over single spins (see e.g.\ \cite{Ane00}*{Chapter 1}), we get
\[\var(f)\le cp\sum_{x\in V}\mu((f(\o^x)-f(\o))^2(1-\o_x)).\]
The desired bound \eqref{eq:upper:main:trel} then follows from \eqref{eq:13}, \eqref{eq:def:trel} and Proposition \ref{prop:electricity}.

\subsection{Lower bounds---Proof of Theorem \ref{mainthm 1}\ref{lower:easy} and  \ref{mainthm 1}\ref{lower:main}}
Inject $f=\1_{\{N=1\}}$, the indicator of having exactly one particle, in the logarithmic Sobolev inequality \eqref{eq:def:csob}. For $c>0$ small enough we have
\[\frac{\ent(f^2)}{\cD(f)}=\frac{\mu(N=1)|\log(\mu(N=1))|}{\frac{2|E|\mu(N=1)}{n}\cdot\frac{p}{2-p}}\ge\frac{|\log \mu(N=1)|}{pd_{\rm avg}}\ge c\frac{n}{d_{\rm avg}},\]
since $\mu(N=1)=np(1-p)^{n-1}/(1-(1-p)^n)$. To check the last inequality, one may distinguish the cases $np$ sufficiently large/of order 1/sufficiently small. This proves \eqref{eq:lower:easy}. Using the same function, so that $\var(f)=\m(N=1)(1-\m(N=1))$, we obtain \eqref{eq:lower:easy:trel} in the same way. This concludes the proof of Theorem \ref{mainthm 1}\ref{lower:easy}.

The rest of this subsection is dedicated to the proof of the main lower bound---Theorem \ref{mainthm 1}\ref{lower:main}, so we assume that $p_n=O(1/n)$. Let $\l_0>0$ be the smallest eigenvalue, restricted to the event
$\{N\ge 2\}$ that there are at least two particles, of $-\cL$, where $\cL$ is the generator of {CBSEP}. By \cite{Goel06}*{Lemma 4.2, Equation (1.4)} we have that 
\begin{align*}
\csob&{}\ge \l_0^{-1}|\log(\m(N\ge 2))|,\\
\trel&{}\ge \l_0^{-1}(1-\mu(N\ge 2)),
\end{align*}
the second inequality being easy to check from the definition. It is well known (see e.g.\ \cite{Hermon18}*{Section 3.4}) that
\[
\l_0^{-1}\ge\bbE_{\mu(\cdot\tc N\ge 2)}(\t),
\]
where $\t$ is the first time when $N=1$. Putting these together and recalling that $p_n=O(1/n)$, we obtain
\begin{align*}
\csob&{}\ge \bbE_{\m(\cdot\tc N\ge 2)}(\t)|\log(\m(N\ge 2))|\ge \bbE_{\m(\cdot\tc N\ge 2)}(\t)\O(1+|\log(np_n)|),\\
\trel&{}\ge \bbE_{\mu(\cdot\tc N\ge 2)}(\t)\m(N=1)\phantom{|\log()|}\ge \bbE_{\mu(\cdot\tc N\ge 2)}(\t)\O(1).
\end{align*}
In turn, again using that $p_n=O(1/n)$, we get
  \[
    \bbE_{\mu(\cdot\tc N\ge 2)}(\t)\ge \mu(N=2|N\ge 2)\bbE_{\mu(\cdot\tc N=2)}(\t)\ge \O(1)\bbE_{\mu(\cdot\tc N=2)}(\t).\]

It is not hard to see (e.g.\ via a graphical construction---see Section \ref{sec:graphical}) that  {CBSEP} stochastically dominates a process of coalescing random walks with birth rate $0$, which we will call CSEP. Therefore, $\bbE_\o(\t)\ge \bbE^{\mathrm{\scriptstyle{CSEP}}}_\o(\t)$ for any $\o\in\O_+$. Furthermore, {CSEP} started with two particles has the law of two independent continuous time random walks  which jump along each edge with rate $(1-p)/(2-p)$ and coalesce when they meet. Hence, we obtain \eqref{eq:lower:main} and \eqref{eq:lower:main:trel}, concluding the proof of Theorem \ref{mainthm 1}\ref{lower:main}.

\section{{$g$-{CBSEP}}---Proof of Theorem \ref{mainthm 3}}
\label{sec:main3}
\subsection{Graphical construction}
\label{sec:graphical}
We start by introducing a graphical construction of ${g}$-{CBSEP} for all initial conditions. The graphical construction of CBSEP can then be immediately deduced by considering the 
special case $S_1:=\{1\}$ and $S_0:=\{0\}$.

To each edge $e\in E$ we associate a Poisson process of rate $p/(2-p)$ of arrival times $(t_n^e)_{n=1}^\infty$. Similarly, to each oriented edge $\vec e\in \vec E$ we associate a Poisson process of rate $(1-p)/(2-p)$ of arrival times $(t_n^{\vec e})_{n=1}^\infty$. All the above processes are independent as $e,\vec e$ vary in $E,\vec E$ respectively. 
Furthermore, for $e\in E$, $\vec e\in\vec E$ and $n\ge 1$, we define $X_n^e$ and $X_n^{\vec e}$ to be mutually independent random variables taking values in $S^2$. We assume that for all $n$ and $(u,v)\in\vec E$, the law of $X_n^{(u,v)}$ is $\rho_u(\cdot\tc S_1)\otimes\rho_v(\cdot\tc S_0)$. Similarly, for $\{u,v\}\in E$, the law of $X_n^{\{u,v\}}$ is $\r_u(\cdot\tc S_1)\otimes\r_v(\cdot\tc S_1)$. Given an initial configuration $\o(0)\in\O^{(g)}$ and a realization of the above variables, we define the realization of {$g$-{CBSEP}} $\o(t)$ as follows. 

Fix $t\ge 0$, let $t^*$ be the first arrival time after $t$, and let $\{x,y\}$ be the endpoints of the edge where it occurs. We set $\o_z(t^*)=\o_z(t)$ for all $z\in V\setminus\{x,y\}$. If $E_{\{x,y\}}^{(g)}$ does not occur, that is $\o_x(t)\in S_0$ and $\o_y(t)\in S_0$, we set $\o(t^*)=\o(t)$. Otherwise, we set \[(\o_x(t^*),\o_y(t^*))=\begin{cases}X^{\{x,y\}}_n&\text{if }t^*=t_n^{\{x,y\}},\\
X^{(x,y)}_n&\text{if }t^*=t_n^{(x,y)}.
\end{cases}\]

\begin{obs}\label{obs:coupling}
Let $\o(t)$ and $\o'(t)$ be two {$g$-{CBSEP}} processes constructed using the same Poisson processes $(t_n^e)_{n=1}^\infty$, $(t^{\vec e}_n)_{n=1}^\infty$ and variables $X_n^e$, $X_n^{\vec e}$ above, but with different initial conditions $\o,\o'\in\O^{(g)}$ satisfying $\f(\o)=\f(\o')=\h\in\O$. Fix $t\ge 0$ and let $\cF_t$ be the sigma-algebra generated by the arrival times smaller than or equal to $t$ (but not the $X_n^e$ and $X_n^{\vec e}$ variables). Then $\f(\o(t))=\f(\o'(t))=:\h(t)$ is $\cF_t$-measurable and only depends on $\o$ through its projection $\h$. 

We say that a vertex $v\in V$ is \emph{updated} if $v\in e\in E$ so that there exists $0\le t^*\le t$ and $n$ such that $t^*\in\{t^{e}_n,t^{\vec e}_n,t^{-\vec e}_n\}$ and the event $E_e^{(g)}$ occurs for $\o(t^*)$, i.e.\ a successful update occurs at $v$. Denoting the set of updated vertices by $\X_t$, we have
\begin{itemize}
\item $\X_t$ is $\cF_t$-measurable and only depends on $\o$ through its projection $\h$,
\item if $x\in\X_t$, then $\o_x(t)=\o'_x(t)$ and, conditionally on $\cF_t$, the law of $\o_x(t)$ is $\r(\cdot\tc S_{\h_x(t)})$,
\item if $x\in V\setminus\X_t$, then $\o_x(t)=\o_x(0)$ and, in particular, $\h_x(t)=\h_x$.
\end{itemize}
In particular, for all $x\in V$ such that there exists $t_x\le t$ satisfying $(\f(\o(t_x)))_x\neq (\f(\o(0)))_x$, we have $\o_x(t)=\o_x'(t)$ (since $x\in\X_t$).
\end{obs}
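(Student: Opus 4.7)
The proof is a careful bookkeeping argument based on two structural features of the graphical construction: (i) the update variables $X_n^{\{u,v\}}$ and $X_n^{(u,v)}$ have marginals supported deterministically on $S_1\times S_1$ and $S_1\times S_0$ respectively, so the \emph{projection} of the state right after any successful update is a deterministic function of which Poisson clock rang; and (ii) whether an arrival at an edge $e$ at time $t^*$ actually triggers a change depends only on $\f(\o(t^*-))$ through the event $E_e^{(g)}$.

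My plan is to argue first by induction on the (a.s.\ finite) number of arrivals in $[0,t]$ that $\h(t):=\f(\o(t))$ is a deterministic function of the initial projection $\h=\f(\o)$ and the arrival times recorded in $\cF_t$. At the inductive step, looking at a generic arrival at an edge $e=\{x,y\}$ at time $t^*$, whether $E_e^{(g)}$ holds at $t^*-$ is read off $\h(t^*-)$ (which by induction is $\cF_{t^*}$-measurable and depends on $\o$ only through $\h$), and, if it holds, (i) fixes the post-arrival projection on $\{x,y\}$ deterministically from the clock type. This yields $\cF_t$-measurability of $\h(t)$ together with its dependence on $\o$ only through $\h$; since $\X_t$ is built out of $\h(\cdot)$ and the arrival times alone, it inherits the same property, settling the first claim and the first bullet.

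Next I would address the second bullet. Fix $x\in\X_t$ and let $s\le t$ be the most recent successful update time at $x$. By the graphical construction, $\o_x(s)$ and $\o'_x(s)$ are both set to the $x$-component of the same update variable, hence agree. Between $s$ and $t$, any further arrival at an edge containing $x$ either fails to trigger (leaving $\o_x$ unchanged) or would itself be a successful update at $x$, contradicting maximality of $s$. Hence $\o_x(t)=\o_x(s)=\o'_x(t)$. For the conditional-law claim, conditionally on $\cF_t$ the edge and index identifying the last update variable at $x$ are determined; that variable is independent of $\cF_t$, and its prescribed $x$-marginal is $\r_x(\cdot\tc S_1)$ or $\r_x(\cdot\tc S_0)$ according to the deterministic projection-type of the last update, which by construction equals $\r_x(\cdot\tc S_{\h_x(t)})$. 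The third bullet is immediate from the definition of an updated vertex, and the final "in particular" clause is its contrapositive: any change of the projection at $x$ by time $t$ forces a successful update at $x$, so $x\in\X_t$ and the second bullet applies.

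I do not anticipate a conceptual obstacle, as the observation is essentially a verification. The step warranting most care is the conditional-law identification in the second bullet, where one must use the a priori independence of the families $(X_n^e)$ and $(X_n^{\vec e})$ from the Poisson arrival times in order to strip away all randomness outside the relevant update variable, and then observe that the choice of edge/orientation and index of that variable is itself $\cF_t$-measurable, a fact that follows from the determinism of $\h(\cdot)$ and $\X_t$ given $(\h,\cF_t)$ established in the first step.
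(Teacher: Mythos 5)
Your verification is correct and is precisely the (unwritten) argument the paper intends: Observation 4.1 is stated without proof as an immediate consequence of the graphical construction, and your two structural points --- that the projection of the post-update state is deterministic given the clock type, and that success of an update is read off the projection alone --- are exactly what makes the induction over arrivals, the $\cF_t$-measurability of $\h(\cdot)$ and $\X_t$, and the conditional-law identification go through. No gaps; the care you take with the independence of the $X$-variables from $\cF_t$ when identifying the conditional law is the right place to be careful.
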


\subsection{Proof of Theorem \ref{mainthm 3}}
We are now ready to prove Theorem \ref{mainthm 3}. The lower bound is an immediate consequence of the fact that the projection chain on the variables $\f(\o)$ coincides with CBSEP. 

For the upper bound, let $\m_t^\h$ be the law of the {CBSEP} $\h_t$ at time $t$ with parameter $p=\r(S_1)$ and starting point $\h\in \O_+$. Further denote $\nu^\eta=\r(\cdot\tc \f(\o) =\eta)$, the measure $\r$ conditioned on whether or not a particle is present at each site. Since $\r$ is itself product, we have 
\begin{equation}
\label{eq:nueta}\n^\h=\bigotimes_{x\in V}\r_x(\cdot\tc S_{\h_x})=\bigotimes_{x:\h_x=1}\r_x(\cdot\tc S_1)\otimes\bigotimes_{x:\h_x=0}\r_x(\cdot\tc S_0).
\end{equation}
\begin{claim}
\label{claim:law}
The law $\rho_t^{\nu^\eta}$ of 
{$g$-{CBSEP}} with initial law $\nu^\eta$ at time $t$ takes the form  
\begin{equation}
\label{eq:rtnh}
\rho_t^{\nu^\eta}(\cdot)= \mu_t^\eta\big(\nu^{\eta_t}(\cdot)\big),
\end{equation}
i.e.\ it is the average of $\n^{\h'}$ over $\h'$ distributed as the {CBSEP} configuration $\h_t$ started from $\h$ at time $t$.
\end{claim}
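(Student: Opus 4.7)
The plan is to show that, conditionally on the $\s$-algebra $\cF_t$ of Poisson arrival times up to time $t$ used in the graphical construction of Section \ref{sec:graphical}, the law of $\o(t)$ is exactly $\n^{\h_t}$. Averaging over $\cF_t$, together with the fact (from Observation \ref{obs:coupling}) that $\h_t=\f(\o(t))$ is $\cF_t$-measurable and distributed as $\m_t^\h$ (this is the projection chain property, already recorded in Section \ref{subsec:CBSEP}), will then immediately yield \eqref{eq:rtnh}.

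To establish the conditional law, the key input from Observation \ref{obs:coupling} is the decomposition of $V$ into the updated vertices $\X_t$ and the non-updated vertices $V\setminus\X_t$, both $\cF_t$-measurable. For $x\in V\setminus\X_t$ we have $\o_x(t)=\o_x(0)$ and $\h_x(t)=\h_x$; since the initial law $\n^\h$ is the product \eqref{eq:nueta}, the coordinate $\o_x(0)$ has law $\r_x(\cdot\tc S_{\h_x})=\r_x(\cdot\tc S_{\h_x(t)})$ and is independent of the remaining randomness. For $x\in\X_t$, the last update touching $x$ before $t$ resamples the endpoints of the corresponding edge using an independent mark $X_n^{e}$ or $X_n^{\vec e}$, whose marginal on the $x$-coordinate is exactly $\r_x(\cdot\tc S_{\h_x(t)})$ by the construction of these variables.

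The delicate point, and in my view the main obstacle, is the conditional independence of the coordinates $(\o_x(t))_{x\in V}$ given $\cF_t$: a single update simultaneously rewrites two coordinates, so one might worry about correlations being introduced. The point is that each mark $X_n^{e}$ or $X_n^{\vec e}$ is itself a \emph{product} measure on the appropriate conditioned subset of $S\times S$, so the two coordinates are assigned conditionally independent values. I would formalize this by induction on the number of successful update events in $[0,t]$, maintaining the invariant that conditionally on $\cF_s$, the law of $\o(s)$ is $\bigotimes_{x\in V}\r_x(\cdot\tc S_{\h_x(s)})=\n^{\h_s}$. The base case $s=0$ is precisely \eqref{eq:nueta}; for the induction step, between successful updates nothing changes, while at a successful update on an edge $\{x,y\}$ the new pair $(\o_x,\o_y)$ is drawn from a product measure independent of all earlier marks and of the other coordinates, so the product structure is preserved and the marginals on $x,y$ become $\r_x(\cdot\tc S_{\h_x(s)})\otimes\r_y(\cdot\tc S_{\h_y(s)})$, matching the updated projection. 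Integrating out $\cF_t$ completes the proof.
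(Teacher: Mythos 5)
Your proposal is correct and follows essentially the same route as the paper: both condition on the $\sigma$-algebra $\cF_t$ of arrival times, use Observation \ref{obs:coupling} to split $V$ into updated and non-updated vertices, identify the conditional law of $\o(t)$ as the product $\n^{\h(t)}$ after averaging the initial condition over $\n^\h$, and then integrate out $\cF_t$ using that $\h(t)\sim\m_t^\h$. The only cosmetic difference is that the paper fixes a deterministic initial $\o$, writes $\bbP^\o(\o(t)=\o'\tc\cF_t)$ explicitly as a product over $\X_t$ and its complement, and averages over $\o\sim\n^\h$ at the end, whereas you propagate the product law forward by induction on successful updates; your explicit induction merely spells out the conditional independence that the paper asserts directly in its product formula.
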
  
\begin{proof}
Fix $\o$ satisfying $\f(\o)=\h$. Denote by $\bbP^\o$ the probability w.r.t.\ the graphical construction of {$g$-{CBSEP}} of Section \ref{sec:graphical} with initial condition $\o$, by $\cF_t$ the sigma-algebra generated by the arrival times up to time $t$, as in Observation \ref{obs:coupling}, and by $\bbE_{\cF_t}$ the corresponding expectation. Then for any $\o'\in\O^{(g)}$
\begin{equation}
\label{eq:roto'}
\r^\o_t(\o')=\bbE_{\cF_t}\left[\bbP^\o(\o(t)=\o'\tc \cF_t)\right]=\bbE_{\cF_t}\left[\prod_{x\not\in\X_t}\1_{\o'_x=\o_x}\prod_{x\in\X_t}\r_x(\o'_x\tc S_{(\f(\o(t)))_x})\right],
\end{equation}
the last equality reflecting that by Observation \ref{obs:coupling} $\X_t$ and $\f(\o(t))$ are $\cF_t$-measurable. Again by Observation \ref{obs:coupling}, $\X_t$ and $\f(\o(t))$ are the same for all $\o$ in the support of $\n^\h$, so we denote the latter by $\h(t)$. If we now average \eqref{eq:roto'} over the initial condition $\o$ w.r.t.\ $\n^\h$ and use \eqref{eq:nueta}, we obtain
$\r^{\n^\h}(\o')=\bbE_{\cF_t}[\n^{\h(t)}(\o')],$
which is exactly \eqref{eq:rtnh}, since $\h(t)$ has the law $\m_t^\eta$ of CBSEP with initial state $\h$, as it is the projection of {$g$-{CBSEP}} with initial condition $\o$ such that $\f(\o)=\h$
\end{proof} 
Next we write
\begin{align}
\label{eq:Fabio}
\max_{\o\in \O^{(g)}_+}\|\rho_t^\o-\rho_+\|_{\rm TV}{}\le
  \max_{\eta\in \O_+}\left(
  \max_{\o:\f(\o)=\eta}\left\|\rho_t^\o-\rho_t^{\nu^\eta}\right\|_{\rm TV}
  +\left\|\rho_t^{\nu^\eta}-\rho_+\right\|_{\rm TV}\right),
  \end{align}
  where $\rho_+$ was defined in Section \ref{subsec:CBSEP}.
Using Claim \ref{claim:law}, it follows that
\begin{equation}
  \label{eq:10}
\max_{\eta\in \O_+}\left\|\rho_t^{\nu^\eta}-\rho_+\right\|_{\rm TV} =
\max_{\eta\in \O_+}
\|\mu_t^\eta-\mu\|_{\rm TV},
\end{equation}
where $\mu$ is the reversible measure of {CBSEP} with parameter $p$.  

To bound the first term in the r.h.s. of \eqref{eq:Fabio} the key ingredient is to use the graphical construction to embed into {$g$-{CBSEP}} of a suitable continuous time simple random walk $(W_t)_{t\ge 0}$ on $G$ 
with the property that $g$-CBSEP at time $t$ has a "particle" at the location of $W_t$. 

Given $\o\in\O^{(g)}_+$, let $v\in V$ be such that $\f(\o_v)=1,$ and let $t^*=\min \{t_n^{(u,v)}> 0\}$ be
the first time an edge of the form $(u,v)$ is resampled to produce a configuration $\o'$ with $\o'_u\in S_1$ and $\o'_v\in S_0$. We then set $W_s=v$ for $s<t^*$ and $W_{t^*}=u$. By iterating the construction we construct $(W_t)_{t\ge 0}$ with $W_0=v$. It is clear that $\f(\o_{W_t(\o)}(t))=1$ for all $t$ and that the law $\bbP_v(\cdot)$ of $(W_t)_{t\ge 0}$ is that of a continuous-time random walk started at $v$ and jumping to a uniformly chosen neighbour at rate $d_{W_t}(1-p)/(2-p)$. 
We denote by $\scov$ the cover time of $(W_t)_{t\ge 0}$.\footnote{We use $\scov$ to distinguish it from the cover time $\t_{\rm cov}$ of the \emph{discrete time} simple random walk on $G$.} 

Observation \ref{obs:coupling} then implies
  \begin{equation}
    \label{eq:11}
\max_{\eta\in \O_+}
  \max_{\o:\f(\o)=\eta}\left\|\rho_t^\o-\rho_t^{\nu^\eta}\right\|_{\rm TV}\le \max_{\substack{\o,\o'\in \O_+^{(g)}\\\f(\o)=\f(\o')} }\|\rho_t^\o-\rho_t^{\o'}\|_{\rm TV}
\le \max_{v\in V} \bbP_v(\scov>t).
  \end{equation}
The upper bound given in the theorem now follows immediately from \eqref{eq:Fabio}, \eqref{eq:10}, and \eqref{eq:11} together with a standard
comparison between $\scov$ and the cover time of the \emph{discrete time} simple random walk on $G$.

\begin{appendix}
\section*{Proof of Lemma \ref{lem:birth-death}}
\label{appn}
Recall that
\[
  \g(k)={n\choose k} \frac{p^k}{(1-p)^{k}}\frac{(1-p)^n}{1-(1-p)^n}
\]
and consider the birth and death process on $\{1,\dots, n\}$
reversible w.r.t.\ $\g$ with Dirichlet form
\begin{equation*}
  \cD_\g(g)=\sum_{k=2}^n \g(k) k [g(k)-g(k-1)]^2,
       \end{equation*}
corresponding to the jump rates $c(1,0)=0$ and 
\begin{align*}
c(k,k-1)&{}=k &k&{}=2,\dots, n\\
c(k,k+1)&{}=(n-k)\frac{p}{1-p}&k{}&=1,\dots,n-1,
\end{align*}
Let $m=\lceil pn \rceil$ and  $i=\max(2,m)$. Using \cite{Miclo99}*{Proposition 4} (see also \cite{Yang08}) the logarithmic Sobolev constant of the above chain
is bounded from above, up to an absolute multiplicative constant, by 
the number $ C_*=C_-\vee C_+$,
where
\begin{align}
  C_+&=\max_{j\ge i+1}\left(\sum_{k=i+1}^{j}\frac{1}{\g(k)c(k,k-1)}\right)\g(N\ge
          j)|\log\left(\g(N\ge j)\right)|,\nonumber\\
  C_-&=\max_{j\le i-1}\left(\sum_{k=j}^{i-1}\frac{1}{\g(k)c(k,k+1)}\right)\g(N\le
          j)|\log\left(\g(N\le j)\right)|.\label{eq:def:C-}
\end{align}
Assume first that $i=m$ and let us start with $C_+$. For $\ell\ge 1$ write $a_\ell=\frac{1}{(m+\ell)\g(m+\ell)}$ and $S_k=\sum_{\ell=1}^k a_\ell$. We have
\[
  \frac{a_{\ell+1}}{a_{\ell}}=\frac{1-p}{p}\frac{m+\ell}{n-m-\ell}\ge
  1,
\]
from which it follows that for $0<\d<1$ we have
\begin{align}
\label{eq:ratio:bin}
\frac{a_{\ell+1}}{a_{\ell}}&= 1+\Theta(\ell/m)=e^{\Theta(\ell/m)} &\ell&{}\le m,\nonumber\\
\frac{a_{\ell+1}}{a_{\ell}}&\ge \frac{(1-p)(m+\d m)}{p(n-m)}\ge 1+\d&\ell&{}\ge \d m.  
\end{align}
In particular, for any two integers $s\le t\le m$ such that $t-s\ge \min(\sqrt{m},m/s)$, 
it holds that for some absolute constant $\b>1$
\begin{equation}
\label{eq:ratio:partial}
\frac{a_{t}}{a_s}=\prod_{\ell=s}^{t-1}\frac{a_{\ell+1}}{a_{\ell}}
=e^{\Theta((t-s)t/m)}
\ge \b.
\end{equation}
We first analyse the behaviour of $S_k\g(N\ge m+k)|\log(\g(N\ge m+k))|$ for $k\le \d m$ where $\d>0$ is
a sufficiently small constant depending on $\b$. 
\begin{lem}
\label{lem:app}
There exists a constant $c>0$ such that for $\d>0$ small enough and $k\le \d m$ we have
  \[
    S_k\g(N\ge m+k)|\log(\g(N\ge m+k))|\le c
    \]
  \end{lem}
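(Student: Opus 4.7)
The plan is to bound the three factors $S_k$, $\gamma(N\ge m+k)$ and $|\log\gamma(N\ge m+k)|$ separately using the basic identity $a_k\gamma(m+k)=1/(m+k)$ together with the ratio estimates of \eqref{eq:ratio:bin} and \eqref{eq:ratio:partial}, then split into the regimes $1\le k\le\sqrt m$ and $\sqrt m\le k\le \delta m$.

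Chaining \eqref{eq:ratio:bin} downward gives, for $0\le j\le k\le \delta m$ with $\delta$ sufficiently small,
\[\frac{a_{k-j}}{a_k}\le \exp\!\Bigl(-\tfrac{c(k-j)j}{m}\Bigr),\qquad \frac{\gamma(m+k+j)}{\gamma(m+k)}\le \exp\!\Bigl(-\tfrac{ckj}{2m}\Bigr),\]
where in the second bound one uses \eqref{eq:ratio:partial} (or the second part of \eqref{eq:ratio:bin}) to control the tail $k+j\ge \delta m$. Summing the corresponding geometric series yields
\[S_k\le C\Bigl(1+\frac{m}{k}\Bigr)a_k,\qquad \gamma(N\ge m+k)\le C\Bigl(1+\frac{m}{k}\Bigr)\gamma(m+k).\]
Multiplying these and using $a_k\gamma(m+k)=1/(m+k)$ gives $S_k\gamma(N\ge m+k)\le C(1+m/k)^2/(m+k)$, which rearranges to $O(m/k^2)$ in the range $\sqrt m\le k\le\delta m$. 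For $k\le \sqrt m$ one can instead use the trivial $S_k\le ka_k$ together with the Stirling lower bound $\gamma(m+k)\ge c/\sqrt m$ (valid for $k\le\delta m$ since $\gamma(m)=\Theta(1/\sqrt m)$ and $\gamma(m+k)/\gamma(m)\ge e^{-Ck^2/m}$) to obtain $S_k\le Ck/\sqrt m=O(1)$.

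The matching lower bound on $\gamma(N\ge m+k)$ is obtained by retaining only the first $\lfloor m/(Ck)\rfloor\vee 1$ terms in the tail sum; by the same ratio chain they are all at least $\gamma(m+k)/e$, so
\[\gamma(N\ge m+k)\ge c\Bigl(1+\frac{m}{k}\Bigr)\gamma(m+k)\ge \frac{c'\sqrt m}{k\vee\sqrt m}\,e^{-Ck^2/m}.\]
In the first regime $1\le k\le\sqrt m$ this gives $\gamma(N\ge m+k)\ge c'>0$, so $|\log\gamma(N\ge m+k)|=O(1)$, and the product is $O(1)$. In the second regime $\sqrt m\le k\le\delta m$ we have
\[S_k\gamma(N\ge m+k)=O(m/k^2),\qquad |\log\gamma(N\ge m+k)|=O\bigl(k^2/m+\log(k/\sqrt m)+1\bigr),\]
whose product is $O(1+(m/k^2)\log(k/\sqrt m))$. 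Writing $k=\alpha\sqrt m$ with $\alpha\in[1,\delta\sqrt m]$, the non-trivial term reads $\alpha^{-2}\log\alpha$, which attains its maximum $1/(2e)$ at $\alpha=\sqrt e$ and is therefore uniformly bounded on $[1,\infty)$.

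The delicate point is the lower bound on $\gamma(N\ge m+k)$: the crude estimate $\gamma(N\ge m+k)\ge\gamma(m+k)$ would produce a spurious $\tfrac12\log m$ inside $|\log\gamma|$, which the prefactor $m/k^2$ cannot absorb at $k$ of order $\sqrt m$. Only by retrieving the full Mills-ratio prefactor $\sqrt m/k$ (via summing the $\sim m/k$ nearly-equal tail terms) does the offending logarithm shrink to $\log(k/\sqrt m)$, which exactly matches the $\alpha^{-2}\log\alpha$ bound above.
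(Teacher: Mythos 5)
Your proof is correct. It rests on the same two ratio estimates \eqref{eq:ratio:bin}--\eqref{eq:ratio:partial} as the paper's argument and reaches the same intermediate bounds, namely $S_k\le C(1+m/k)a_k$ and $\g(N\ge m+k)\le C(1+m/k)\g(m+k)$, whose product is $O(m/k^2)$ via $a_k\g(m+k)=1/(m+k)$; but the execution differs in two respects. First, you get the bound on $S_k$ by directly summing the geometric-type series $a_{k-j}\le a_k e^{-cjk/m}$, whereas the paper runs the recursive block decomposition $k_0=1$, $k_1=\lceil\sqrt m\,\rceil$, $k_{t+1}=k_t+\lceil m/k_t\rceil$ and shows that the block sums $S_{k_{t+1}}-S_{k_t}$ grow geometrically; your route is the standard Mills-ratio computation for the binomial tail and is arguably more transparent. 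Second, and more substantively, you control $|\log\g(N\ge m+k)|$ by proving a matching \emph{lower} bound $\g(N\ge m+k)\ge c\,(\sqrt m/(k\vee\sqrt m))e^{-Ck^2/m}$ and then optimising $\a^{-2}\log\a$ over $\a=k/\sqrt m\ge 1$; the paper instead asserts $|\log(\g(N\ge m+k))|\le ck^2/m$, which as written is not literally correct for $k\ll\sqrt m$ (it should be $c(1+k^2/m)$, and justifying even that requires exactly the tail lower bound you supply). Your closing remark that the crude bound $\g(N\ge m+k)\ge\g(m+k)$ would leave an unabsorbable $\tfrac12\log m$ at $k\asymp\sqrt m$ correctly identifies where the difficulty sits. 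Two small slips, neither of which affects the argument: the parenthetical claim that $\g(m+k)\ge c/\sqrt m$ holds ``for $k\le\d m$'' is false once $k\gg\sqrt m$ (the correct statement is $\g(m+k)\ge cm^{-1/2}e^{-Ck^2/m}$, and you only use the constant lower bound in the regime $k\le\sqrt m$ where it is valid); and keeping $\lfloor m/(Ck)\rfloor$ tail terms each within a factor $e$ of $\g(m+k)$ requires $k\gtrsim\sqrt m$ --- for smaller $k$ one should truncate at $\asymp\sqrt m$ terms instead, which again only matters outside the regime where you actually invoke the refined lower bound.
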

  \begin{proof}
Let $0<\d<1$. Define recursively
\[
  k_0=1, \quad k_1=\lceil\sqrt{m}\,\rceil, \quad k_{t+1}=k_t+\lceil
  m/k_{t}\rceil,
\]
and let $T$ be the first index such that $k_T\ge \d m$.
Using
\eqref{eq:ratio:partial} together with $a_{\ell+1}\ge a_\ell,$
$k_{t+1}-k_t\le k_t-k_{t-1},$ and $k_t/m\le \d,$ we claim
that for any $2\le t\le T-1$
\begin{equation}
\begin{aligned}
  \frac{(S_{k_{t+1}}-S_{k_t})}{(S_{k_t}-S_{k_{t-1}})}={}&
  \frac{\sum_{\ell=k_t+1}^{k_{t+1}}a_\ell}{\sum_{\ell=k_{t-1}+1}^{k_{t}}a_\ell}\ge \b \frac{k_{t+1}-k_t}{k_{t}-k_{t-1}}\\
  \ge{}& \b\left(\frac{k_t}{k_{t-1}}+\frac{k_t}{m}\right)^{-1}\ge \b\left(\frac{k_t}{k_{t-1}}+\d\right)^{-1}. 
  \end{aligned}
  \label{eq:7}
\end{equation}
To prove the first inequality in \eqref{eq:7}, observe that for any positive non-decreasing sequence
$(a_j)_{j=1}^\infty$ and positive integers $m\le n$,
\begin{align*}
\frac{a_{n+1}+\dots+a_{n+m}}{a_1+\dots +a_n}
  \ge{}& \min_{j}\left(\frac{a_{j+m}}{a_{j}}\right)
  \left(\frac{a_{n-m+1}+\dots+a_n}{a_1+\dots +a_n}\right)\\
\ge{}& \min_{j}\left(\frac{a_{j+m}}{a_{j}}\right)
  \left(\frac{\sum_{j=n-m+1}^n a_j}{(n-m)a_{n-m}+\sum_{j=n-m+1}^n a_j}\right)\\
  \ge{}& \min_{j}\left(\frac{a_{j+m}}{a_{j}}\right) \frac mn,
\end{align*}
because $\sum_{j=n-m+1}^n a_j\ge a_{n-m}m$.

If now $\d,t$ are chosen
small enough and large enough, respectively, depending on the constant $\b$ above, the
r.h.s.\ of \eqref{eq:7} is greater than e.g.\ $\b^{1/2}>1$.  In other words, fixing $\d$ small enough and $t_0$ large enough, the sequence $\left(\left(S_{k_{t+1}}-S_{k_t}\right)\right)_{t=t_0}^T, t_0\gg 1,$ is exponentially increasing. 

Now fix $k\le\d m$ and $t$ such that $k_t\le k< k_{t+1}$. Assume first that $t_0\le t<T$. Then, for some positive constant $c$ allowed to depend on $\b$ and $t_0$ and to change from line to line, we have
\begin{equation*}
    \label{eq:Sk}
    \begin{aligned}
S_k&{}\le \sum_{s=t_0}^{t+1} \left(S_{k_s}-S_{k_{s-1}}\right) +
     S_{t_0}\le c \left(S_{k_{t+1}}-S_{k_{t}}\right)+S_{t_0}\\
&{}\le c\frac{k_{t+1}-k_t}{m\g(m+k_{t+1})}\le c\frac{k_{t+1}-k_t}{m\g(m+k)}. 
\end{aligned}
\end{equation*}
If instead $0\le t<t_0$, we directly have that 
\[S_k\le ka_k\le c\frac{k_{t+1}-k_t}{m\g(m+k)}.\]
Using the bounds
\begin{align*}
  \g(N\ge m+k)&{}\le c\frac{m+k}{k}\g(m+k),&|\log\left(\g(N\ge m+k)\right)|&{}\le c k^2/m,
\end{align*}
we finally get that $k_t\le k< k_{t+1}, t<T,$
\[S_k\g(N\ge m+k)|\log(\g(N\ge m+k))|\le
  c\frac{(k_{t+1}-k_t)k_{t+1}}{m}\le c.\qedhere\]
\end{proof}

Let $\d$ be as in Lemma \ref{lem:app}. We next consider the easier case, $k\ge \d m$. By
\eqref{eq:ratio:bin}, for $c$ large enough depending on $\d$ and allowed to change from line to line, we have that $S_k\le 
S_{k_T}+ca_k\le ca_k$ and
$\g(N\ge m+k)\le c\g(m+k)$. Thus, for $k\ge \d m$, we have that
\[
  S_k\g(N\ge m+k)|\log(\g(N\ge
  m+k))| \le \frac{c}{m+k}|\log(\g(N\ge
    m+k))|\le c\log(1/p),\]
since for all $k$ we trivially have $\g(m+k)\ge p^{m+k}$. In conclusion, we have proved that
$C_+\le O(\log(1/ p))$ if $m\ge 2$. If instead $m=1$, then the very same computations still give $C_+\le O(\log(1/
p))$, Lemma \ref{lem:app} being void.

The bound of $C_-$ follows the same pattern. If $m=O(1)$, the reader
may readily check that $C_-=O(1)$ because all terms in \eqref{eq:def:C-} are
$O(1)$. If instead $m\gg 1$, we still obtain $C_-=O(1)$, concluding the proof of Lemma \ref{lem:birth-death}.
\end{appendix}

\section*{Acknowledgements}
We wish to thank the Department of Mathematics and Physics of
University Roma Tre for the kind hospitality. Enlightening discussions
with P. Caputo, Y. Peres, J. Salez, A. Shapira, J. Swart and P. Tetali are also warmly acknowledged. We thank the anonymous referees for their helpful remarks, which helped improve the presentation of the paper.\\
We acknowledge financial support from: ERC Starting Grant 680275 ``MALIG'', ANR-15-CE40-0020-01 and PRIN 20155PAWZB ``Large Scale Random Structures''.
\bibliographystyle{plain}
\bibliography{Bib}
\end{document}